\documentclass[12pt]{article}
\usepackage[a4paper,margin=1.1in]{geometry}
\usepackage{amssymb}
\usepackage{amsmath}
\usepackage{amsfonts}
\usepackage{amsthm}
\usepackage{color}
\usepackage{float}
\usepackage[all]{xy}
\usepackage{scalerel}
\usepackage{mathabx}
\usepackage{delimset}
\usepackage{setspace}

\newcommand{\qbinom}{\genfrac{[}{]}{0pt}{}}

\newcommand{\minus}{\textup{\texttt{-}}}

\def\P{\mathop{\mbox{\textup{P}}}\nolimits}

\newcommand{\C}{\mathbb{C}}

\newcommand{\s}{\mathrm{S}}
\newcommand{\rr}{\mathrm{R}}

\newcommand{\E}{\mathcal{E}}

\newtheorem{theorem}{Theorem}

\newtheorem{proposition}{Proposition}
\newtheorem{corollary}{Corollary}
\newtheorem{example}{Example}

\newtheorem{remark}{Remark}

\begin{document}

\title{\textbf{Negative-index $u$-deformed homogeneous functions and Ramanujan partial theta functions}}
\author{Ronald Orozco L\'opez}
\newcommand{\Addresses}{{% additional braces for segregating \footnotesize
  \bigskip
  \footnotesize

  %R.~Orozco, \textsc{Departamento de Matematicas, Universidad de los Andes, 
  % Carrera 1 N. 18A-12 Bogot\'a, Colombia}\par\nopagebreak
  \textit{E-mail address}, R.~Orozco: \texttt{rj.orozco@uniandes.edu.co}
  
}}

\maketitle
%\tableofcontents

\begin{abstract}
We introduce and study a class of \(u\)-deformed homogeneous functions of complex order \(\rr_{\alpha}(x,y;u\,|\,q)\), obtained by extending \(u\)-deformed homogeneous polynomials beyond the nonnegative integer regime. We establish their basic analytic and algebraic properties, including convergence criteria, recurrence relations, \(q\)-derivative formulas, and an operator-theoretic realization through a deformed \(q\)-exponential operator. A central feature of the theory appears in the negative-index sector: for every nonnegative integer \(n\), the functions \(\rr_{\minus n\minus1}(x,y;u\,|\,q)\) admit finite triangular decompositions in terms of shifted Ramanujan partial theta functions. We also prove the inverse triangular transformation, showing that the corresponding partial theta functions can be recovered from the negative-index \(u\)-deformed homogeneous functions. We also derive pantograph-type \(q\)-difference equations for these functions. Finally, we derive specialized forms of the negative-index theory in Cauchy and Stieltjes--Wigert-type regimes, including a boundary geometric case and genuine partial-theta regimes.
\end{abstract}
\noindent 2020 {\it Mathematics Subject Classification}:
Primary 33D15. Secondary 05A30; 33D45; 39A13.

\noindent \emph{Keywords: } \(u\)-deformed homogeneous functions; complex-order \(q\)-series; basic hypergeometric series; generalized \(q\)-binomial coefficients; Ramanujan partial theta functions; \(q\)-difference equations; pantograph-type equations; Stieltjes--Wigert-type functions.

\section{Introduction}

The theory of \(q\)-series and basic hypergeometric functions provides a central framework for the study of special functions, \(q\)-difference equations, and combinatorial generating functions. Among its fundamental objects are \(q\)-shifted factorials, generalized \(q\)-binomial coefficients, Jackson \(q\)-derivatives, and \(q\)-exponential operators. These tools make it possible to construct and relate families of \(q\)-polynomials and analytic \(q\)-series through operator-theoretic and algebraic methods. A recurring question in this context is whether polynomial families indexed by nonnegative integers admit meaningful extensions to complex order. In the classical binomial setting, the identity 
\[ 
(x+y)^n = \sum_{k=0}^{n} \binom{n}{k}x^{n-k}y^k 
\] 
extends naturally to complex order through a convergent binomial series inside a nonzero disk. The corresponding \(q\)-binomial situation is more delicate. Replacing the integer degree by a complex parameter in the natural \(q\)-binomial expansion may destroy convergence: the resulting non-polynomial \(q\)-series can have radius of convergence zero. This obstruction shows that a direct \(q\)-binomial continuation does not always produce an analytic object. The purpose of this paper is to introduce a deformation mechanism that controls this obstruction. We study the \(u\)-deformed homogeneous functions 
\[ 
\rr_{\alpha}(x,y;u\,|\,q) = \sum_{k=0}^{\infty} \begin{bmatrix} \alpha\\ k \end{bmatrix}_{q} u^{\binom{k}{2}}x^{\alpha-k}y^k, \qquad 0<q<1. 
\] 
For \(\alpha=n\in\mathbb N_0\), this series terminates and recovers the \(u\)-deformed homogeneous polynomials \cite{orozco}. For general \(\alpha\in\mathbb C\), however, the same expression gives a complex-order continuation whose analytic behavior depends essentially on the deformation parameter \(u\). A first analytic feature of the theory is the presence of a sharp threshold at \(|u|=q\). Below this threshold, the complex-order extensions are entire in the second variable; at the threshold, they converge in a finite disk; above it, the non-polynomial extensions have radius of convergence zero. Thus the parameter \(u\) acts as a regularizing deformation: it determines whether the formal complex-order \(q\)-binomial expansion becomes a genuine analytic family or remains only a divergent formal object. The main structural phenomenon of the paper appears in the negative-index sector. While nonnegative integer orders give finite polynomials, negative integer orders produce infinite \(q\)-series. We show that these infinite series are not arbitrary: for every \(n\in\mathbb N_0\), the function 
\[ 
\rr_{\minus n\minus1}(x,y;u\,|\,q) 
\] 
can be expressed as a finite triangular combination of shifted Ramanujan partial theta functions. Moreover, this triangular relation is invertible: the corresponding shifted partial theta functions can be recovered from the negative-index \(u\)-deformed homogeneous functions. Thus the negative-index sector is triangularly equivalent to a finite system of shifted partial theta functions. 

The paper is organized as follows. In Section~2 we collect the preliminaries on \(q\)-shifted factorials, generalized \(q\)-binomial coefficients, \(u\)-deformed basic hypergeometric series, and Jackson \(q\)-derivatives. Section~3 introduces the functions \(\rr_{\alpha}(x,y;u\,|\,q)\), establishes their first representations, discusses their structural interpretation, proves convergence properties, and derives recurrence and \(q\)-differentiation formulas. In particular, the convergence theorem identifies the threshold \(|u|=q\) and explains the obstruction to the undeformed complex-order continuation. Section~4 provides an operator-theoretic realization of \(\rr_{\alpha}(x,y;u\,|\,q)\) through the action of a \(u\)-deformed \(q\)-exponential operator on the complex power \(x^\alpha\). This shows that the functions are not merely formal series, but arise as deformed \(q\)-translation orbits of complex powers. Section~5 uses the recurrence and \(q\)-differentiation formulas from Section~3 to derive pantograph-type \(q\)-difference equations for suitable specializations of \(\rr_{\alpha}(x,y;u\,|\,q)\). These equations couple proportional scales such as \(y\), \(qy\), \(uy\), and \(uqy\). Section~6 contains the central results of the paper. We introduce Ramanujan's partial theta function and prove a finite theta decomposition for the negative-index functions \(\rr_{\minus n\minus1}(x,y;u\,|\,q)\). We then prove the inverse triangular transformation, showing that the shifted partial theta functions and the negative-index \(u\)-deformed homogeneous functions determine each other. The section also includes explicit low-order examples and an asymptotic limit obtained when the negative index tends to infinity under a simultaneous \(q\)-scaling of the second variable. Finally, Section~7 develops several specializations. The Cauchy specialization corresponds to the boundary case \(u=q\), where the partial theta parameter becomes \(1\) and the partial theta functions degenerate into geometric series. This produces finite rational decompositions in the negative-index sector. The Stieltjes--Wigert-type specializations correspond to \(u=q^b\), \(b\geq2\), and lead to genuine partial theta functions with parameter \(q^{b-1}\). We also record the half-integer Exton-type regime \(u=q^{b+\frac12}\), whose theta parameter is \(q^{b-\frac12}\). These specializations show that the triangular correspondence obtained in Section~6 is stable under classical \(q\)-special regimes and is not tied to a single value of the deformation parameter.

\section{Preliminaries}

In this section, we collect the notation and basic identities used throughout the paper. We recall \(q\)-shifted factorials, generalized \(q\)-binomial coefficients, the \(u\)-deformed basic hypergeometric series, and the Jackson \(q\)-derivative. These preliminaries provide the algebraic and analytic framework for the construction of the complex-order \(u\)-deformed homogeneous functions studied below.
\subsection{$q$-shifted factorials and $q$-binomial coefficients}
Some notation and terminology for basic hypergeometric series are taken from \cite{gasper}. Let $0<q<1$ and the $q$-shifted factorial be defined by
\begin{align*}
    (a;q)_{n}&=\prod_{k=0}^{n\minus1}(1-q^{k}a),\\
    (a;q)_{\infty}&=\lim_{n\rightarrow\infty}(a;q)_{n}=\prod_{k=0}^{\infty}(1-aq^{k}).
\end{align*}
The multiple $q$-shifted factorials are defined by
\begin{align*}
    (a_{1},a_{2},\ldots,a_{m};q)_{n}&=(a_{1};q)_{n}(a_{2};q)_{n}\cdots(a_{m};q)_{n},\\
    (a_{1},a_{2},\ldots,a_{m};q)_{\infty}&=(a_{1};q)_{\infty}(a_{2};q)_{\infty}\cdots(a_{m};q)_{\infty}.
\end{align*}
Some useful identities for $q$-shifted factorial:
\begin{align}
    (a;q)_{n}&=\sum_{k=0}^{n}\qbinom{n}{k}_{q}(\minus a)^{k}q^{\binom{k}{2}},\label{iden3}\\
    (a;q)_{n}&=\frac{(a;q)_{\infty}}{(aq^n;q)_{\infty}},\label{iden4}\\
    (a;q)_{n+k}&=(a;q)_{n}(aq^{n};q)_{k},\label{iden5}\\
    (aq^{\minus n};q)_{n}&=(\minus a)^nq^{\minus\binom{n+1}{2}}(a^{\minus1}q;q)_{n},\label{iden6}
\end{align}
where the $q$-binomial coefficient is defined by
\begin{equation*}
\qbinom{n}{k}_{q}=\frac{(q;q)_{n}}{(q;q)_{k}(q;q)_{n\minus k}}.
\end{equation*}
The generalized $q$-binomial coefficient verifies that:
\begin{align}
    \qbinom{\alpha+1}{k}_{q}&=\qbinom{\alpha}{k}_{q}+q^{\alpha+1\minus k}\qbinom{\alpha}{k\minus1}_{q}=
    q^{k}\qbinom{\alpha}{k}_{q}+\qbinom{\alpha}{k\minus1}_{q},\label{eqn_pascal}\\
    \qbinom{\alpha}{k}_{q}&=\frac{(q^{\minus\alpha};q)_{k}}{(q;q)_{k}}(\minus q^\alpha)^{k}q^{\minus\binom{k}{2}},\label{iden1}\\
    \qbinom{\minus\alpha}{k}_{q}&=\qbinom{\alpha+k\minus1}{k}_{q}(\minus q^{\minus\alpha})^kq^{\minus\binom{k}{2}}.\label{iden2}
\end{align}
In our work, we will use the identities for binomial coefficients:
\begin{align}
    \binom{n+k}{2}&=\binom{n}{2}+\binom{k}{2}+nk,\label{iden7}\\
    \binom{n-k}{2}&=\binom{n}{2}+\binom{k}{2}+k(1-n).\label{iden8}
\end{align}

\subsection{Deformed basic hypergeometric series}

In \cite{orozco}, a class of \(u\)-deformed basic hypergeometric series depending on an additional deformation parameter was introduced. Their convergence properties and their relation with classical basic hypergeometric series under suitable specializations of \(u\) were also discussed. This ${}_{r}\Phi_{s}$-series is
   \begin{align}
        &{}_{r}\Phi_{s}\left(
    \begin{array}{c}
         a_{1},a_{2},\ldots,a_{r} \\
         b_{1},\ldots,b_{s}
    \end{array}
    ;q,u,z
    \right)\nonumber\\
    &\hspace{3cm}=\sum_{n=0}^{\infty}u^{\binom{n}{2}}\frac{(a_{1},a_{2},\ldots,a_{r};q)_{n}}{(q,b_{1},b_{2},\ldots,b_{s};q)_{n}}\bigg[(\minus1)^nq^{\binom{n}{2}}\bigg]^{1+s-r}z^n,\label{def_DBHS}
    \end{align}
where $0<q<1$ and $u\in\C$. If we set \(u=1\) in Eq.~(\ref{def_DBHS}), we recover the classical basic hypergeometric series
\begin{equation*}
    {}_r\phi_{s}\left(
    \begin{array}{c}
         a_{1},a_{2},\ldots,a_{r} \\
         b_{1},\ldots,b_{s}
    \end{array}
    ;q,z
    \right)=\sum_{n=0}^{\infty}\frac{(a_{1},a_{2},\ldots,a_{r};q)_{n}}{(q;q)_{n}(b_{1},b_{2},\ldots,b_{s};q)_{n}}\Big[(\minus1)^{n}q^{\binom{n}{2}}\Big]^{1+s-r}z^n.
\end{equation*}
In this paper, we will frequently use the $q$-binomial theorem:
\begin{equation}
    {}_1\phi_{0}\left(\begin{array}{c}
         a\\
         - 
    \end{array};q,z\right)=\frac{(az;q)_{\infty}}{(z;q)_{\infty}}=\sum_{n=0}^{\infty}\frac{(a;q)_{n}}{(q;q)_{n}}z^{n}.
\end{equation}

Some convergence conditions for $_{r}\Phi_{s}$-series are
\begin{itemize}
    \item $1+s-r\neq0$. If $0<\vert uq^{1+s-r}\vert<1$, then ${}_{r}\Phi_{s}$ is an entire function. If $\vert uq^{1+s-r}\vert=1$, then ${}_{r}\Phi_{s}$ converges for $\vert z\vert<1$. If $\vert uq^{1+s-r}\vert>1$, then ${}_{r}\Phi_{s}$ is divergent.
    \item $1+s-r=0$. If $0<\vert u\vert<1$, then ${}_{r}\Phi_{s}$ is an entire function. If $\vert u\vert=1$, then ${}_{r}\Phi_{s}$ converges for $\vert z\vert<1$. If $\vert u\vert>1$, then ${}_{r}\Phi_{s}$ is divergent.
\end{itemize}
For the special choices \(u=q^b\) and \(u=q^{b+\frac12}\), one obtains the following reductions to classical basic hypergeometric series:
\begin{itemize}
    \item If $u=q^b$, $b\geq0$, 
\begin{equation}
    {}_{1+r}\Phi_{r}\left(
    \begin{array}{c}
         a_{1},a_{2},\ldots,a_{r+1}\\
         b_{1},\ldots,b_{r}
    \end{array}
    ;q,q^b,z
    \right)={}_{1+r}\phi_{r+b}\left(
    \begin{array}{c}
         a_{1},a_{2},\ldots,a_{r+1} \\
         b_{1},b_{2},\ldots,b_{r},\mathbf{0}_{b}
    \end{array}
    ;q,(\minus1)^bz
    \right)
\end{equation}
for all $z\in\C$.
\item If $u=q^{b+1/2}$, $b\geq0$
\begin{align}
    &{}_{r+1}\Phi_{r}\left(
    \begin{array}{c}
         a_{1},\ldots,a_{r+1} \\
         b_{1},\ldots,b_{r}
    \end{array}
    ;q,q^{b+1/2},z
    \right)\nonumber\\
    &\hspace{1cm}={}_{2r+2}\phi_{2(r+b)+2}\left(
    \begin{array}{ccc}
         \sqrt{a_{1}},-\sqrt{a_{1}}&,\ldots,&\sqrt{a_{r+1}},-\sqrt{a_{r+1}}\hspace{0.6cm}\\
         \sqrt{b_{1}},-\sqrt{b_{1}}&,\ldots,&\sqrt{b_{r}},-\sqrt{b_{r}},-\sqrt{q},\mathbf{0}_{2b+1}
    \end{array}
    ;\sqrt{q},\minus z
    \right)
\end{align}
for all $z\in\C$. 
\end{itemize}

\subsection{The Jackson $q$-derivative}

The Jackson \(q\)-derivative is defined by
\begin{equation*}
    D_{q}f(x)=\frac{f(x)-f(qx)}{x}
\end{equation*}
and the Leibniz rule for $D_{q}$
\begin{equation}\label{eqn_leibniz}
    D_{q}^{n}\{f(x)g(x)\}=\sum_{k=0}^{n}q^{k(k\minus n)}\qbinom{n}{k}_{q}D_{q}^{k}\{f(x)\}D_{q}^{n\minus k}\{g(q^{k}x)\}.
\end{equation}
From Eq.(\ref{iden2}), for $k\geq1$,
\begin{equation}
    D_{q}^nx^{\minus k}=(q;q)_{n}\qbinom{-k}{n}_{q}x^{-n-k}=(\minus1)^{n}q^{\minus kn\minus\binom{n}{2}}(q;q)_{n}\qbinom{n+k-1}{n}_{q}x^{\minus k\minus n}.\label{iden9}
\end{equation}
Mansour at. el. \cite{mansour} showed that the \(k\)-th \(q\)-derivative of a function \(f\) can be expressed in terms of its values at the points \(xq^j\), \(j=0,1,\ldots,k\), as follows:
\begin{equation}\label{mansour}
    D_{q}^{k}f(x)=(-1)^kx^{-k}q^{-\binom{k}{2}}\sum_{i=0}^{k}(-1)^i\qbinom{k}{i}_{q}q^{\binom{i}{2}}f(xq^{k-i}).
\end{equation}

\section{Deformed homogeneous functions $\rr_{\alpha}(x,y;u\,|\,q)$}

This section is devoted to the definition and study of the $u$‑deformed homogeneous functions $\rr_{\alpha}(x,y;u\,|\,q)$. We present their main analytic and algebraic properties, including convergence results, recurrence relations, and $q$‑derivatives. Particular attention is given to their behavior for negative integer indices and their structural connections with other classes of $q$‑special functions.

\subsection{Definition and first representations}

Throughout the paper, we assume \(0<q<1\). For \(\alpha\in\mathbb{C}\),
\(u\in\mathbb{C}\), and \(x\neq 0\), we define the \(u\)-deformed
homogeneous function of order \(\alpha\) by
\begin{equation}\label{def_function}
\rr_{\alpha}(x,y;u\,|\,q)
=
\sum_{k=0}^{\infty}
\begin{bmatrix}
\alpha\\ k
\end{bmatrix}_{q}
u^{\binom{k}{2}}
x^{\alpha-k}y^k .
\end{equation}

When \(\alpha=n\in\mathbb{N}_0\), the above series terminates and one
recovers the \(u\)-deformed homogeneous polynomials
\[
\rr_n(x,y;u\,|\,q)
=
\sum_{k=0}^{n}
\begin{bmatrix}
n\\ k
\end{bmatrix}_{q}
u^{\binom{k}{2}}
x^{n-k}y^k.
\]
Thus \(\rr_{\alpha}(x,y;u\,|\,q)\) provides a continuation of the polynomial
family from nonnegative integer order to arbitrary complex order.
Using Eq.(\ref{iden1}) we obtain
\begin{equation}\label{qseries_rr}
\rr_{\alpha}(x,y;u\,|\,q)
=
x^\alpha
\sum_{k=0}^{\infty}\left(\frac{u}{q}\right)^{\binom{k}{2}}
\frac{(q^{-\alpha};q)_k}{(q;q)_k}
\left(-q^\alpha\frac{y}{x}\right)^k .
\end{equation}
In particular, for \(\alpha\neq 0\), the negative-order functions admit
the expansion
\begin{equation}\label{rr_GF_neg}
\rr_{-\alpha}(x,y;u\,|\,q)
=
\frac{1}{x^\alpha}
\sum_{k=0}^{\infty}
\begin{bmatrix}
\alpha+k-1\\ k
\end{bmatrix}_{q}
\left(\frac{u}{q}\right)^{\binom{k}{2}}
\left(-\frac{y}{q^\alpha x}\right)^k .
\end{equation}
This form shows that negative orders are governed by the generating
functions of generalized \(q\)-binomial coefficients. This negative-order expansion will be the starting point for the finite theta decompositions obtained in Section 6.

\begin{proposition}[Deformed hypergeometric representation]
Let \(0<q<1\), \(u\in\mathbb C\), \(\alpha\in\mathbb C\), and \(x\neq 0\).
Then
\[
\rr_{\alpha}(x,y;u\,|\,q)
=
x^\alpha\,
{}_2\Phi_0
\left(
\begin{matrix}
q^{-\alpha},0\\
-
\end{matrix}
;q,u,q^\alpha\frac{y}{x}
\right).
\]
\end{proposition}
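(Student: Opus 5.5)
The plan is a term-by-term comparison between the series (\ref{qseries_rr}) and the definition (\ref{def_DBHS}) with $r=2$, $s=0$. First, I specialize (\ref{def_DBHS}) to $r=2$, $s=0$, $a_1=q^{-\alpha}$, $a_2=0$. Since $1+s-r=-1$ and $(0;q)_k=1$, the $k$-th term is
\[
u^{\binom{k}{2}}\frac{(q^{-\alpha};q)_k}{(q;q)_k}\Bigl[(-1)^kq^{\binom{k}{2}}\Bigr]^{-1}z^k
=\left(\frac{u}{q}\right)^{\binom{k}{2}}\frac{(q^{-\alpha};q)_k}{(q;q)_k}(-z)^k ,
\]
where I use $(-1)^{-k}=(-1)^k$.

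Next I substitute $z=q^\alpha y/x$. The $k$-th term becomes
\[
\left(\frac{u}{q}\right)^{\binom{k}{2}}\frac{(q^{-\alpha};q)_k}{(q;q)_k}\left(-q^\alpha\frac{y}{x}\right)^k,
\]
which is exactly the $k$-th term of (\ref{qseries_rr}). Multiplying by $x^\alpha$ then gives the claim, as an identity of formal power series in $y$.

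It remains to justify (\ref{qseries_rr}) itself, which I would do by inserting (\ref{iden1}) into (\ref{def_function}). The factor $u^{\binom{k}{2}}q^{-\binom{k}{2}}$ combines into $(u/q)^{\binom{k}{2}}$. The powers $(-q^\alpha)^k$ and $x^{\alpha-k}y^k=x^\alpha(y/x)^k$ combine into $x^\alpha(-q^\alpha y/x)^k$.

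Convergence needs no separate argument: both sides are the same series term by term, so they converge or diverge together. By the stated convergence criterion for $1+s-r=-1$, the threshold is $|u/q|=1$. The only real point of care is the sign and power bookkeeping coming from $[(-1)^kq^{\binom{k}{2}}]^{-1}$, and I expect no genuine obstacle.
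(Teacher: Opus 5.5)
Your proposal is correct and is essentially the paper's own argument: specialize (\ref{def_DBHS}) to $r=2$, $s=0$ with $a_2=0$, rewrite $u^{\binom{k}{2}}\bigl[(-1)^kq^{\binom{k}{2}}\bigr]^{-1}$ as $(u/q)^{\binom{k}{2}}(-1)^k$, and match term by term with (\ref{qseries_rr}). The only addition is that you explicitly derive (\ref{qseries_rr}) from (\ref{iden1}); the paper does this in the text just before the proposition.
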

\begin{proof}
By the definition of the deformed basic hypergeometric series,
\[
{}_2\Phi_0
\left(
\begin{matrix}
q^{-\alpha},0\\
-
\end{matrix}
;q,u,q^\alpha\frac{y}{x}
\right)
=
\sum_{n=0}^{\infty}
u^{\binom{n}{2}}
\frac{(q^{-\alpha};q)_n(0;q)_n}{(q;q)_n}
\left[(-1)^nq^{\binom{n}{2}}\right]^{-1}
\left(q^\alpha\frac{y}{x}\right)^n .
\]
Multiplying by \(x^\alpha\), we get
\[
x^\alpha
{}_2\Phi_0
\left(
\begin{matrix}
q^{-\alpha},0\\
-
\end{matrix}
;q,u,q^\alpha\frac{y}{x}
\right)
=
x^\alpha
\sum_{n=0}^{\infty}\left(\frac{u}{q}\right)^{\binom{n}{2}}
\frac{(q^{-\alpha};q)_n}{(q;q)_n}
\left(-q^\alpha\frac{y}{x}\right)^n,
\]
which from Eq.(\ref{qseries_rr}) is precisely \(\rr_{\alpha}(x,y;u\,|\,q)\).
\end{proof}

\subsection{Structural interpretation}

The family \(\rr_{\alpha}(x,y;u\,|\,q)\) should be regarded not merely as a deformation of homogeneous polynomials, but as a two-parameter mechanism by which binomial-type algebraic structures are continued beyond the polynomial regime. In the classical and \(q\)-binomial settings, homogeneity is governed by linear shifts of the index and by first-order \(q\)-difference symmetries. The additional quadratic weight \(u^{\binom{k}{2}}\) introduces a second layer of deformation, encoding a nonlinear dependence on the summation index and thereby altering the analytic nature of the resulting expansion.

From this point of view, the passage from nonnegative integer order to
arbitrary complex order is not a purely formal extension, but a structural
transition. For \(\alpha\in\mathbb{N}_0\), the functions
\(\rr_{\alpha}(x,y;u\,|\,q)\) are finite algebraic objects. For nonintegral and,
in particular, negative integral orders, they become genuinely analytic
objects governed by infinite \(q\)-series. As will be shown in Section 6,
the negative-index sector is controlled by Ramanujan partial theta
functions through a finite triangular correspondence. Thus the same
deformed homogeneous calculus interpolates between finite \(q\)-binomial
geometry and theta-type analytic phenomena.

\subsection{Convergence}

We next record the basic convergence properties of
\(\rr_{\alpha}(x,y;u\,|\,q)\). These properties already show that the
deformation parameter \(u\) plays a decisive analytic role.

\begin{theorem}[Convergence]\label{conver}
Let \(0<q<1\), \(x\neq 0\), \(u\in\mathbb C\), and
\(\alpha\in\mathbb C\). We use the convention \(u^0=1\), also when
\(u=0\).

\begin{enumerate}
\item If \(\alpha\in\mathbb N_0\), then
\(\rr_{\alpha}(x,y;u\,|\,q)\) is a polynomial in \(y\). In particular,
it is entire in \(y\) for every \(u\in\mathbb C\).

\item If \(\alpha\notin\mathbb N_0\), then:
\begin{enumerate}
\item If \(|u|<q\), then \(\rr_{\alpha}(x,y;u\,|\,q)\) is entire in \(y\).
\item If \(|u|=q\), then \(\rr_{\alpha}(x,y;u\,|\,q)\) converges for
\[
\left|\frac{y}{x}\right|<|q^{-\alpha}|.
\]
\item If \(|u|>q\), then the radius of convergence in \(y\) is zero.
\end{enumerate}
\end{enumerate}
\end{theorem}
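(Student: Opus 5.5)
The plan is to work directly from the $q$-series form (\ref{qseries_rr}), writing $\rr_{\alpha}(x,y;u\,|\,q)=x^\alpha\sum_k c_k z^k$ with $z=y/x$ and
\[
c_k=\left(\frac{u}{q}\right)^{\binom{k}{2}}\frac{(q^{-\alpha};q)_k}{(q;q)_k}(-q^{\alpha})^k .
\]
We then compute the radius of convergence in $z$ via the Cauchy--Hadamard formula, or equivalently via the ratio test when it applies. Part 1 is immediate. For $\alpha=n\in\mathbb N_0$, $(q^{-n};q)_k=0$ for $k>n$, so the sum is a polynomial in $y$ for any $u$ (the convention $u^0=1$ handles $u=0$).

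For part 2, the key observation is that when $\alpha\notin\mathbb N_0$, no factor $1-q^{j-\alpha}$ vanishes, so $c_k\neq0$ for all $k$ whenever $u\neq 0$. (When $u=0$ only $k=0,1$ survive and the function is trivially entire, which is consistent with (a).) Moreover, since $0<q<1$, we have $(q^{-\alpha};q)_k\to(q^{-\alpha};q)_\infty\neq0$ and $(q;q)_k\to(q;q)_\infty\neq0$. So the ratio $(q^{-\alpha};q)_k/(q;q)_k$ converges to a finite nonzero constant $L$, and its $k$-th root tends to $1$. This gives
\[
|c_k|^{1/k}\sim \left|\frac{u}{q}\right|^{(k-1)/2}\,|q^{\alpha}| .
\]
Equivalently, $|c_{k+1}/c_k|=|u/q|^k\,|q^\alpha|\,\frac{|1-q^{k-\alpha}|}{1-q^{k+1}}$, and the last factor tends to $1$.

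The three cases then split on $|u/q|$:
\begin{itemize}
\item[(a)] If $|u|<q$, the ratio tends to $0$, so the series is entire in $z$, hence in $y$.
\item[(b)] If $|u|=q$, the ratio tends to $|q^\alpha|$. The radius is then $|q^{-\alpha}|=q^{-\mathrm{Re}\,\alpha}$, giving convergence for $|y/x|<|q^{-\alpha}|$.
\item[(c)] If $|u|>q$, the ratio tends to $\infty$, so the radius is zero. Here the fact that $L\neq 0$, i.e.\ $\alpha\notin\mathbb N_0$, is essential: it guarantees infinitely many nonzero terms.
\end{itemize}

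The main care point is precisely this nonvanishing. We must verify that $q^{j-\alpha}\neq1$ for every $j\ge0$. This holds because $q^{j-\alpha}=1$ with $0<q<1$ forces $\alpha=j$, using the principal determination $q^{-\alpha}=e^{-\alpha\log q}$: then $e^{(j-\alpha)\log q}=1$ means $(j-\alpha)\log q\in2\pi i\mathbb Z$. If one allows $\alpha$ with imaginary part $2\pi m/\log q$, these are genuinely degenerate cases in which the series also terminates. So I would either note that the statement implicitly excludes them, or interpret $\mathbb N_0$ in the $q^{\alpha}$-sense and state that explicitly. Everything else is a routine limit computation.
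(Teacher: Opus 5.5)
Your proposal is correct and follows the paper's proof: apply the ratio test to the $q$-series form of $\rr_{\alpha}$, whose consecutive-term ratio is asymptotic to $|q^{\alpha}y/x|\,|u/q|^{k}$, and split into the cases $|u|<q$, $|u|=q$ and $|u|>q$. You also check explicitly that $(q^{-\alpha};q)_k\neq0$, which the ratio test and the radius-zero claim in (c) require but the paper skips; and you are right that for $\alpha=n+\frac{2\pi i m}{\log q}$ with $n\in\mathbb N_0$, $m\neq0$, the series terminates, so (c) holds only after excluding these values (equivalently, reading $\alpha\notin\mathbb N_0$ as $q^{\alpha}\notin\{1,q,q^{2},\dots\}$).
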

\begin{proof}
If \(\alpha\in\mathbb N_0\), then
\(\begin{bmatrix}\alpha\\ k\end{bmatrix}_q=0\) for \(k>\alpha\), so the
series terminates. If \(u=0\), then \(u^{\binom{k}{2}}=0\) for \(k\geq 2\), while \(u^0=1\) by convention. Hence the defining series reduces to its first two terms and is entire in \(y\). Assume now that \(u\neq 0\) and \(\alpha\notin\mathbb N_0\). Using the representation
\[
\rr_{\alpha}(x,y;u\,|\,q)
=
x^\alpha
\sum_{k=0}^{\infty}
\frac{(q^{-\alpha};q)_k}{(q;q)_k}
\left(\frac{u}{q}\right)^{\binom{k}{2}}
\left(q^\alpha\frac{y}{x}\right)^k,
\]
the ratio of two consecutive terms has modulus asymptotic to
\[
\left|q^\alpha\frac{y}{x}\right|
\left|\frac{u}{q}\right|^k ,
\qquad k\to\infty.
\]
Hence the limit is \(0\) when \(|u|<q\), equals
\(\left|q^\alpha y/x\right|\) when \(|u|=q\), and is infinite when
\(|u|>q\), unless \(y=0\). The result follows from the ratio test.
\end{proof}

\begin{remark}[Obstruction to undeformed complex-order continuations]
Let
\[
\mathrm{H}_{\alpha}(x,y\,|\,q)
=
\sum_{k=0}^{\infty}
\begin{bmatrix}
\alpha\\ k
\end{bmatrix}_q
x^{\alpha-k}y^k
\]
be the formal \(q\)-binomial continuation of the two-variable
Rogers--Szegő polynomials. This corresponds to the specialization
\(u=1\) of \(\rr_{\alpha}(x,y;u\,|\,q)\). Since \(0<q<1\), we have
\(|u|=1>q\). Hence, by Theorem~\ref{conver}, if
\(\alpha\notin\mathbb N_0\), the series \(H_{\alpha}(x,y\,|\,q)\)
has radius of convergence zero in the variable \(y\).

This behavior contrasts with the classical binomial expansion
\[
(x+y)^\alpha
=
x^\alpha
\sum_{k=0}^{\infty}
\binom{\alpha}{k}
\left(\frac{y}{x}\right)^k,
\qquad
\left|\frac{y}{x}\right|<1,
\]
which provides a nontrivial analytic continuation of the polynomial
identity \((x+y)^n\) to complex order. Thus, while the classical
binomial coefficients allow a convergent complex-order continuation
inside a nonzero disk, the natural \(q\)-binomial continuation fails to
do so in the undeformed case.

More generally, the same analytic obstruction occurs for every
deformation parameter satisfying \(|u|>q\). In this regime, the
quadratic factor \(u^{\binom{k}{2}}\) does not compensate the growth
coming from the generalized \(q\)-binomial coefficients, and the
resulting complex-order series has radius of convergence zero in \(y\). Thus the parameter \(u\) is not merely auxiliary; it determines whether the complex-order continuation is an analytic object or only a divergent formal expansion.

\end{remark}

\begin{corollary}
Let \(m\in\mathbb N\). Then \(\rr_{-m}(x,y;u\,|\,q)\) is entire in \(y\)
whenever \(|u|<q\). If \(|u|=q\), then it converges for
\[
\left|\frac{y}{x}\right|<q^m.
\]
If \(|u|>q\), then its radius of convergence in \(y\) is zero.
\end{corollary}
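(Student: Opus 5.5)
This corollary is the specialization \(\alpha=-m\), \(m\in\mathbb N\), of Theorem~\ref{conver}. Since \(-m\notin\mathbb N_0\), part~2 of that theorem applies, and the only work is to evaluate the threshold \(|q^{-\alpha}|\) and to check that the hypothesis \(\alpha\notin\mathbb N_0\) really gives a nonterminating series here. I would also verify the statement directly from the negative-order expansion (\ref{rr_GF_neg}), which makes the coefficients explicit.

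\textbf{First step: the series does not terminate.} By (\ref{rr_GF_neg}) with \(\alpha=m\),
\[
\rr_{-m}(x,y;u\,|\,q)=x^{-m}\sum_{k=0}^{\infty}\qbinom{m+k-1}{k}_{q}\left(\frac{u}{q}\right)^{\binom{k}{2}}\left(-\frac{y}{q^m x}\right)^k .
\]
Every coefficient \(\qbinom{m+k-1}{k}_q\) is a positive polynomial in \(q\), so no term vanishes when \(u\neq0\). Equivalently, \((q^{m};q)_k\neq0\) for all \(k\), because \(0<q<1\) and \(m\geq1\).

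\textbf{Second step: ratio test.} The ratio of consecutive terms is
\[
\frac{\qbinom{m+k}{k+1}_q}{\qbinom{m+k-1}{k}_q}=\frac{1-q^{m+k}}{1-q^{k+1}}\to1
\]
times \(|u/q|^{k}\,|y|/(q^m|x|)\). This tends to \(0\) if \(|u|<q\) and to \(+\infty\) if \(|u|>q\) (for \(y\neq0\)), which gives the first and third claims. When \(|u|=q\), the ratio tends to \(|y/x|\,q^{-m}\), so the series converges for \(|y/x|<q^{m}\).

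\textbf{Reconciliation with the theorem.} The radius in the boundary case is \(q^{m}\), whereas Theorem~\ref{conver}(b) with \(\alpha=-m\) literally reads \(|y/x|<|q^{m}|=q^m\). These are consistent. This is the only point needing care, namely keeping track of the sign of the exponent in passing from (\ref{qseries_rr}) to (\ref{rr_GF_neg}). Using the explicit form (\ref{rr_GF_neg}) removes any ambiguity, since the variable appears there as \(y/(q^m x)\).

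The case \(u=0\) falls under \(|u|<q\): the series reduces to two terms and is trivially entire. No step is a genuine obstacle; the argument is a direct specialization of Theorem~\ref{conver}.
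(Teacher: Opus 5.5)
Your proposal is correct and is essentially the paper's argument: the corollary is stated without a separate proof as the case $\alpha=-m$ of Theorem~\ref{conver}, where $|q^{-\alpha}|=q^{m}$, and your ratio test on (\ref{rr_GF_neg}) is the same ratio test the paper uses to prove that theorem. Your explicit check that $\qbinom{m+k-1}{k}_q>0$ for every $k$ is a worthwhile detail, because it confirms that the series for $\rr_{-m}$ genuinely does not terminate, which the theorem's hypothesis $\alpha\notin\mathbb N_0$ only implicitly guarantees.
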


\subsection{Recurrence relations and \(q\)-derivatives}

The functions \(\rr_{\alpha}(x,y;u\,|\,q)\) retain several of the structural
features of homogeneous polynomials. In particular, they satisfy
binomial-type recurrence relations and simple \(q\)-differentiation
formulas. In what follows, \(D_q\) acts with respect to the variable \(x\).

\begin{proposition}
For every \(\alpha\in\mathbb C\), the following identities hold as formal power series, and analytically wherever the corresponding series
converge.
\[
\rr_{\alpha+1}(x,y;u\,|\,q)
=
x\rr_{\alpha}(x,qy;u\,|\,q)
+
y\rr_{\alpha}(x,uy;u\,|\,q),
\]
and
\[
\rr_{\alpha+1}(x,y;u\,|\,q)
=
x\rr_{\alpha}(x,y;u\,|\,q)
+
y\rr_{\alpha}(qx,uy;u\,|\,q).
\]
Moreover,
\[
D_q\{\rr_{\alpha}(x,a;u\,|\,q)\}
=
(1-q^\alpha)\rr_{\alpha-1}(x,a;u\,|\,q),
\]
and
\[
D_q\{\rr_{\alpha}(a,x;u\,|\,q)\}
=
(1-q^\alpha)\rr_{\alpha-1}(a,ux;u\,|\,q).
\]
For \(k\geq 1\),
\begin{equation}\label{derkx_rr}
D_q^k\{\rr_{\alpha}(x,a;u\,|\,q)\}
=
(q;q)_k
\begin{bmatrix}
\alpha\\ k
\end{bmatrix}_q
\rr_{\alpha-k}(x,a;u\,|\,q),
\end{equation}
and
\[
D_q^k\{\rr_{\alpha}(a,x;u\,|\,q)\}
=
u^{\binom{k}{2}}
(q;q)_k
\begin{bmatrix}
\alpha\\ k
\end{bmatrix}_q
\rr_{\alpha-k}(a,u^kx;u\,|\,q).
\]
\end{proposition}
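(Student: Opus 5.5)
All six identities will be checked coefficient by coefficient, working directly with the defining series \eqref{def_function}. Everything reduces to the Pascal rules \eqref{eqn_pascal} for generalized $q$-binomial coefficients, the identity $\binom{k}{2}=\binom{k-1}{2}+(k-1)$, and the action of $D_q$ on monomials $x^{\beta}$. The analytic statement then follows at once: each identity is an equality of coefficients of $y^k$ (or $x^k$), so it holds as formal series and hence wherever the series converge, with convergence governed by Theorem~\ref{conver}.

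For the first recurrence, write $\rr_{\alpha+1}(x,y;u\,|\,q)=\sum_k \qbinom{\alpha+1}{k}_q u^{\binom{k}{2}}x^{\alpha+1-k}y^k$. Apply the second form of \eqref{eqn_pascal}, $\qbinom{\alpha+1}{k}_q=q^k\qbinom{\alpha}{k}_q+\qbinom{\alpha}{k-1}_q$.
\begin{itemize}
\item The term with $q^k$ gives $x\sum_k\qbinom{\alpha}{k}_q u^{\binom{k}{2}}x^{\alpha-k}(qy)^k=x\rr_\alpha(x,qy;u\,|\,q)$.
\item In the other term, shift $k\mapsto k+1$ and use $u^{\binom{k+1}{2}}=u^{\binom{k}{2}}u^{k}$. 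This gives $y\sum_k\qbinom{\alpha}{k}_q u^{\binom{k}{2}}x^{\alpha-k}(uy)^k=y\rr_\alpha(x,uy;u\,|\,q)$.
\end{itemize}
The second recurrence works the same way with the first form, $\qbinom{\alpha}{k}_q+q^{\alpha+1-k}\qbinom{\alpha}{k-1}_q$. After the shift, the factor $q^{\alpha-k}x^{\alpha-k}$ combines into $(qx)^{\alpha-k}$, which yields $y\rr_\alpha(qx,uy;u\,|\,q)$.

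For the derivatives in the first slot, use $D_q x^{\beta}=(1-q^{\beta})x^{\beta-1}$ termwise. This gives $(1-q^{\alpha-k})\qbinom{\alpha}{k}_q$. The identity $(1-q^{\alpha-k})\qbinom{\alpha}{k}_q=(1-q^\alpha)\qbinom{\alpha-1}{k}_q$ follows from \eqref{iden1}, or from the Pascal rule together with the absorption identity, and it yields the first derivative formula. Iterating $k$ times gives the factor $\prod_{j=0}^{k-1}(1-q^{\alpha-j})=(q;q)_k\qbinom{\alpha}{k}_q$, which is \eqref{derkx_rr}. Here I would note that the product equals $(q^{\alpha-k+1};q)_k$, and by \eqref{iden1} this matches $(q;q)_k\qbinom{\alpha}{k}_q$.

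For the derivatives in the second slot, $D_q x^k=(1-q^k)x^{k-1}$. Shift the index $k\mapsto k+1$, and use $(1-q^{k+1})\qbinom{\alpha}{k+1}_q=(1-q^{\alpha-k})\qbinom{\alpha}{k}_q$ together with $(1-q^{\alpha-k})\qbinom{\alpha}{k}_q=(1-q^\alpha)\qbinom{\alpha-1}{k}_q$. Splitting $u^{\binom{k+1}{2}}=u^{\binom{k}{2}}u^k$ turns $x^k$ into $(ux)^k$, which produces $(1-q^\alpha)\rr_{\alpha-1}(a,ux;u\,|\,q)$. For the $k$-th derivative I would induct. Applying $D_q$ to $\rr_{\alpha-j}(a,u^jx;u\,|\,q)$ uses the chain rule $D_q[f(cx)]=c\,(D_qf)(cx)$, which contributes a factor $u^j$ and the argument $u^{j+1}x$. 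Collecting $\prod_{j<k}u^j=u^{\binom{k}{2}}$ and the same product $(q;q)_k\qbinom{\alpha}{k}_q$ as before completes the induction.

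The main, though modest, obstacle is the bookkeeping in the generalized-coefficient identities for non-integer $\alpha$. I would verify all of them uniformly from the closed form \eqref{iden1}, by comparing ratios of $(q^{-\alpha};q)_k$. This avoids relying on integer-only intuition.
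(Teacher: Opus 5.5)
Your proposal is correct and takes essentially the same route as the paper. You derive the two recurrences from the two forms of the $q$-Pascal rule by reindexing, and the first-order derivative formulas from termwise $D_q$ together with the absorption identity $(1-q^{\alpha-k})\qbinom{\alpha}{k}_q=(1-q^\alpha)\qbinom{\alpha-1}{k}_q$; you then iterate to get the $k$-th order formulas, where $u^{\binom{k}{2}}$ comes from the successive rescalings (via $D_q[f(cx)]=c\,(D_qf)(cx)$ and $\prod_{j<k}(1-q^{\alpha-j})=(q;q)_k\qbinom{\alpha}{k}_q$), which the paper states only briefly.
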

\begin{proof}
The proof is the same algebraic argument as in the polynomial case,
with the ordinary \(q\)-binomial coefficients replaced by the generalized
coefficients \(\begin{bmatrix}\alpha\\ k\end{bmatrix}_q\). The two recurrence
relations follow from the two \(q\)-Pascal identities Eq.(\ref{eqn_pascal}). Indeed, substituting the defining series and reindexing the second sums gives the stated identities.

For the \(q\)-derivative formulas, we apply
\[
D_q x^\beta=(1-q^\beta)x^{\beta-1}
\]
term by term to the defining series. This is justified in the domain of
convergence described in Theorem~\ref{conver}; equivalently,
the identities may first be read as identities of formal power series.
The elementary relation
\[
(1-q^{\alpha-k})
\begin{bmatrix}
\alpha\\ k
\end{bmatrix}_q
=
(1-q^\alpha)
\begin{bmatrix}
\alpha-1\\ k
\end{bmatrix}_q
\]
then gives
\[
D_q\{\rr_\alpha(x,a;u\,|\,q)\}
=
(1-q^\alpha)\rr_{\alpha-1}(x,a;u\,|\,q).
\]
The formula for \(D_q\{\rr_{\alpha}(a,x;u\,|\,q)\}\) follows similarly:
after reindexing, the factor \(u^j\) in the \(j\)-th term is absorbed
by replacing \(x\) with \(ux\) in \(\rr_{\alpha-1}(a,x;u\,|\,q)\).
Iterating the first-order identities yields the stated formulas for
\(D_q^k\); in the second case the successive shifts produce the factor
\(u^{\binom{k}{2}}\).
\end{proof}

These identities show that the family \(\rr_{\alpha}(x,y;u\,|\,q)\) behaves
as a deformed homogeneous system under the Jackson \(q\)-derivative. In
the next section we explain this phenomenon from an operator-theoretic
point of view.

\section{Operator-theoretic realization}

We now show that the functions introduced in the previous section arise
naturally from the action of a deformed \(q\)-exponential operator on
complex powers. This operator-theoretic point of view is important
because it shows that the functions \(\rr_{\alpha}(x,y;u\,|\,q)\) are not defined merely by a formal series, but are generated by a deformation of the classical \(q\)-translation mechanism.

Let \(D_{q,x}\) denote the Jackson \(q\)-derivative with respect to \(x\). Following \cite{orozco}, we define the \(u\)-deformed \(q\)-exponential operator by
\[
\E(yD_{q,x}\,|\,u)
=
\sum_{n=0}^{\infty}
u^{\binom{n}{2}}
\frac{(yD_{q,x})^n}{(q;q)_n}.
\]
When \(u=0\), we use the convention \(u^0=1\), so that the above series
reduces to
\[
\E(yD_{q,x}\,|\,0)
=
1+\frac{yD_{q,x}}{1-q}.
\]

Thus \(\rr_{\alpha}(x,y;u\,|\,q)\) may be viewed as the deformed \(q\)-translation orbit of the complex power \(x^\alpha\).

\begin{proposition}[Operator realization]\label{prop_translation}
Let \(0<q<1\), \(u\in\mathbb C\), \(\alpha\in\mathbb C\), and \(x\neq0\).
Then
\[
\E(yD_{q,x}\,|\,u)\{x^\alpha\}
=
\rr_{\alpha}(x,y;u\,|\,q),
\]
as a formal power series in \(y\), and analytically wherever the series
on the right-hand side converges.
\end{proposition}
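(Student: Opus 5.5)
The plan is to apply the operator term by term to \(x^\alpha\) and to identify the coefficients of \(y^n\) with those in the defining series (\ref{def_function}). Since \(D_{q,x}\) acts only on \(x\), the variable \(y\) behaves as a scalar. Hence \((yD_{q,x})^n\{x^\alpha\}=y^nD_{q,x}^n\{x^\alpha\}\), and the whole computation reduces to finding the iterated Jackson derivative of a complex power.

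The first step is to prove by induction on \(n\) that
\[
D_{q,x}^n\{x^\alpha\}=(q^{\alpha-n+1};q)_n\,x^{\alpha-n}=(q;q)_n\begin{bmatrix}\alpha\\ n\end{bmatrix}_q x^{\alpha-n}.
\]
The starting point is \(D_qx^\beta=(1-q^\beta)x^{\beta-1}\), which follows from the definition: \((x^\beta-(qx)^\beta)/x\), where \((qx)^\beta=q^\beta x^\beta\) for the principal branch because \(q>0\). Iterating gives the product \(\prod_{j=0}^{n-1}(1-q^{\alpha-j})\). This product equals \((q;q)_n\begin{bmatrix}\alpha\\ n\end{bmatrix}_q\), either by the defining quotient for the generalized coefficient or by rewriting through (\ref{iden1}). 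The rewriting uses
\[
\prod_{j=0}^{n-1}(1-q^{\alpha-j})=(-1)^nq^{n\alpha-\binom{n}{2}}(q^{-\alpha};q)_n,
\]
which is (\ref{iden6}) with \(a=q^{\alpha+1}\). The case \(n=1\) of the formula also matches (\ref{derkx_rr}) applied with \(a=0\).

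The second step is to substitute this formula into the operator series. This gives
\[
\E(yD_{q,x}\,|\,u)\{x^\alpha\}=\sum_{n\ge0}u^{\binom n2}\frac{y^n}{(q;q)_n}(q;q)_n\begin{bmatrix}\alpha\\ n\end{bmatrix}_q x^{\alpha-n}.
\]
After cancelling \((q;q)_n\), this is exactly (\ref{def_function}), so the identity holds as formal power series in \(y\). The case \(u=0\) is covered by the convention \(u^0=1\): both sides then reduce to their first two terms.

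For the analytic statement, both sides are literally the same series in \(y\), with identical coefficients. So wherever the right-hand side converges, which is the region described in Theorem~\ref{conver}, the left-hand side, defined as the sum of the operator series applied termwise, converges to the same value. I expect the main obstacle to be the branch bookkeeping in the identity \((qx)^\beta=q^\beta x^\beta\) for complex \(\beta\), together with making precise that the operator's action is defined termwise. I would handle both by fixing a branch of \(x^\alpha\) on a domain invariant under \(x\mapsto qx\), for instance a slit plane, and noting that \(q\) is a positive real number.
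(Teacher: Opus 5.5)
Your proposal is correct and follows the same route as the paper: use $D_{q,x}^{\,n}x^\alpha=(q;q)_n\qbinom{\alpha}{n}_q x^{\alpha-n}$, substitute into the operator series, cancel $(q;q)_n$ to recover the defining series termwise, and read off the analytic statement from the fact that both sides are the same series in $y$. The paper simply quotes the iterated-derivative formula, whereas you derive it by induction with the check via (\ref{iden6}) and also handle the branch of $x^\alpha$; these fill in details the paper omits but do not change the argument.
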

\begin{proof}
Using
\[
D_{q,x}^{\,n}x^\alpha
=
(q;q)_n
\begin{bmatrix}
\alpha\\ n
\end{bmatrix}_q
x^{\alpha-n},
\]
we obtain
\[
\mathcal E(yD_{q,x}\,|\,u)\{x^\alpha\}
=
\sum_{n=0}^{\infty}
u^{\binom{n}{2}}
\frac{y^n}{(q;q)_n}
D_{q,x}^{\,n}x^\alpha.
\]
Therefore
\[
\mathcal E(yD_{q,x}\,|\,u)\{x^\alpha\}
=
\sum_{n=0}^{\infty}
\begin{bmatrix}
\alpha\\ n
\end{bmatrix}_q
u^{\binom{n}{2}}
x^{\alpha-n}y^n
=
\rr_{\alpha}(x,y;u\,|\,q).
\]
The identity is formal in \(y\), and it is analytic in the convergence
region described in Theorem~1.
\end{proof}

\begin{remark}
For \(\alpha\in\mathbb N_0\), the above identity reduces to the operator representation of the \(u\)-deformed homogeneous polynomials. For general complex \(\alpha\), the same operator generates the complex-order continuation. Thus the passage from polynomials to non-polynomial \(q\)-series is realized by applying the same deformed \(q\)-exponential operator to the family of complex powers \(x^\alpha\). This provides an operator-theoretic interpretation of the functions \(\rr_\alpha(x,y;u\,|\,q)\), complementing the recurrence and \(q\)-differentiation properties established in Proposition~2.
\end{remark}

\section{Pantograph-type $q$-difference equations}

We now use the recurrence and \(q\)-differentiation formulas obtained in Proposition~2 to derive pantograph-type \(q\)-difference equations for suitable specializations of \(\rr_\alpha(x,y;u\,|\,q)\).

\begin{theorem}
Let \(u\in\mathbb C^\ast\) and \(\alpha\in\mathbb C\). For \(x\neq0\), define
\[
F_\alpha(x,y)=\rr_{\alpha}(x,y;u\,|\,q).
\]
Then \(F_\alpha\) satisfies the functional equations of pantograph type
\begin{align}
x(D_{q,x}F_\alpha)(x,qy)
+
y(D_{q,x}F_\alpha)(x,uy)
&=
(1-q^\alpha)F_\alpha(x,y),\label{eqn_gpfde2}
\end{align}
and
\begin{equation}\label{eqn_gpfde}
x(D_{q,y}F_\alpha)(x,u^{-1}y)
+
q^{\alpha-1}y(D_{q,y}F_\alpha)(x,q^{-1}y)
=
(1-q^\alpha)F_\alpha(x,y)    
\end{equation}
with initial value
\[
F_\alpha(x,0)=x^\alpha.
\]
The identities hold as formal identities in the expansion variable and analytically wherever the corresponding series converge.
\end{theorem}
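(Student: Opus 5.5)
The plan is to obtain both functional equations directly from Proposition~2, combining the two recurrence relations (taken at index \(\alpha-1\)) with the first-order \(q\)-derivative formulas. Everything will first be read as identities of formal power series in \(y\), with coefficients that are functions of \(x\). They then transfer to analytic identities on the convergence region of Theorem~\ref{conver}.

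\emph{First equation (\ref{eqn_gpfde2}).} Replace \(\alpha\) by \(\alpha-1\) in the first recurrence of Proposition~2:
\[
F_\alpha(x,y)=x\,\rr_{\alpha-1}(x,qy;u\,|\,q)+y\,\rr_{\alpha-1}(x,uy;u\,|\,q).
\]
Multiply both sides by \((1-q^\alpha)\). Proposition~2 gives \(D_{q,x}\rr_\alpha(x,a;u\,|\,q)=(1-q^\alpha)\rr_{\alpha-1}(x,a;u\,|\,q)\), where the second argument \(a\) is frozen. Apply this with \(a=qy\) and with \(a=uy\). Then \((1-q^\alpha)\rr_{\alpha-1}(x,qy;u\,|\,q)=(D_{q,x}F_\alpha)(x,qy)\), and similarly with \(uy\) in place of \(qy\). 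This is exactly (\ref{eqn_gpfde2}). The only point needing care is that \(D_{q,x}\) acts at a fixed second argument before the substitution \(y\mapsto qy\) or \(y\mapsto uy\). This is legitimate because the dilation in \(y\) commutes with the dilation \(x\mapsto qx\).

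\emph{Second equation (\ref{eqn_gpfde}).} Start from \(D_{q,y}F_\alpha(x,y)=(1-q^\alpha)\rr_{\alpha-1}(x,uy;u\,|\,q)\), which is Proposition~2 with the roles of the variables exchanged. Evaluating at \(u^{-1}y\), which uses \(u\neq0\), gives
\[
(D_{q,y}F_\alpha)(x,u^{-1}y)=(1-q^\alpha)\rr_{\alpha-1}(x,y;u\,|\,q).
\]
Evaluating at \(q^{-1}y\) gives
\[
(D_{q,y}F_\alpha)(x,q^{-1}y)=(1-q^\alpha)\rr_{\alpha-1}(x,uq^{-1}y;u\,|\,q).
\]
Next use the second recurrence at index \(\alpha-1\):
\[
F_\alpha(x,y)=x\,\rr_{\alpha-1}(x,y;u\,|\,q)+y\,\rr_{\alpha-1}(qx,uy;u\,|\,q).
\]
The key extra step is the homogeneity relation
\[
\rr_\beta(qx,uy;u\,|\,q)=q^\beta\,\rr_\beta(x,uq^{-1}y;u\,|\,q).
\]
It follows termwise from the definition (\ref{def_function}), since
\[
(qx)^{\beta-k}(uy)^k=q^\beta x^{\beta-k}(uq^{-1}y)^k,
\]
with \((qx)^{\beta}=q^\beta x^\beta\) valid for the principal branch because \(q>0\). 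Taking \(\beta=\alpha-1\) and multiplying by \((1-q^\alpha)\) yields (\ref{eqn_gpfde}). The initial value \(F_\alpha(x,0)=x^\alpha\) is immediate, since only the \(k=0\) term of (\ref{def_function}) survives.

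The step I expect to be most delicate is not algebraic but the analytic bookkeeping. This is the justification of termwise \(q\)-differentiation, which Proposition~2 already covers, together with the observation that the rescaled arguments \(q^{-1}y\) and \(u^{-1}y\) must also lie in the convergence region. For \(|u|<q\) everything is entire, so there is no issue. In the boundary case \(|u|=q\), the identities hold for \(y\) such that all of \(y,qy,uy,u^{-1}y,q^{-1}y\) satisfy the disk condition of Theorem~\ref{conver}, and formally otherwise. I would state the result in exactly this way.
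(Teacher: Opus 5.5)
Your proposal is correct and follows the paper's proof. The first equation comes from the first recurrence of Proposition~2 combined with the \(D_{q,x}\) formula, and the second from evaluating \(D_{q,y}F_\alpha=(1-q^\alpha)\rr_{\alpha-1}(x,uy;u\,|\,q)\) at \(u^{-1}y\) and \(q^{-1}y\). The only cosmetic difference is in how you get \(x\rr_{\alpha-1}(x,y;u\,|\,q)+q^{\alpha-1}y\rr_{\alpha-1}(x,uq^{-1}y;u\,|\,q)=\rr_\alpha(x,y;u\,|\,q)\): you use the second recurrence plus the rescaling \(\rr_\beta(qx,uy;u\,|\,q)=q^\beta\rr_\beta(x,uq^{-1}y;u\,|\,q)\), while the paper re-expands both series and applies the \(q\)-Pascal identity directly, which is the same identity in different packaging.
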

\begin{proof}
The first identity follows directly from the first recurrence relation in Proposition~2 together with the formula for \(D_{q,x}\). We prove the second identity, since it gives the proportional equation used below. By Proposition~2,
\[
D_{q,y}F_\alpha(x,y)
=
(1-q^\alpha)\rr_{\alpha-1}(x,uy;u\,|\,q).
\]
Hence
\[
(D_{q,y}F_\alpha)(x,u^{-1}y)
=
(1-q^\alpha)\rr_{\alpha-1}(x,y;u\,|\,q),
\]
and
\[
(D_{q,y}F_\alpha)(x,q^{-1}y)
=
(1-q^\alpha)\rr_{\alpha-1}(x,uq^{-1}y;u\,|\,q).
\]
Therefore the left-hand side is
\[
(1-q^\alpha)
\left[
x\rr_{\alpha-1}(x,y;u\,|\,q)
+
q^{\alpha-1}y\rr_{\alpha-1}(x,uq^{-1}y;u\,|\,q)
\right].
\]
Expanding the two terms gives
\[
x\rr_{\alpha-1}(x,y;u\,|\,q)
=
\sum_{k=0}^{\infty}\qbinom{\alpha-1}{k}_q
u^{\binom{k}{2}}x^{\alpha-x}y^k
\]
and
\[
q^{\alpha-1}y\rr_{\alpha-1}(x,uq^{-1}y;u\,|\,q)
=
\sum_{k=1}^{\infty}
q^{\alpha-k}\qbinom{\alpha-1}{k-1}_q
u^{\binom{k}{2}}x^{\alpha-k}y^k.
\]
Thus the expression in brackets is
\[
x^\alpha+
\sum_{k=1}^{\infty}
\left(\qbinom{\alpha-1}{k}_q+
q^{\alpha-k}\qbinom{\alpha-1}{k-1}_q
\right)
u^{\binom{k}{2}}x^{\alpha-k}y^k.
\]
By the \(q\)-Pascal identity,
\[
\begin{bmatrix}
\alpha\\ k
\end{bmatrix}_q
=
\begin{bmatrix}
\alpha-1\\ k
\end{bmatrix}_q
+
q^{\alpha-k}
\begin{bmatrix}
\alpha-1\\ k-1
\end{bmatrix}_q,
\]
and hence the bracketed expression is
\[
\rr_{\alpha}(x,y;u\,|\,q)=F_\alpha(x,y).
\]
This proves the Eq.(\ref{eqn_gpfde}). The initial value follows immediately from
the definition, since the constant term of \(F_\alpha\) is \(x^\alpha\).
\end{proof}

Equivalently, using
\[
D_qf(x)=\frac{f(x)-f(qx)}{x},
\]
the Eq.(\ref{eqn_gpfde}) may be written as
\[
uxF_\alpha(x,u^{-1}y)-uxF_\alpha(x,qu^{-1}y)
+
q^\alpha yF_\alpha(x,q^{-1}y)
-
yF_\alpha(x,y)
=
0.
\]
Replacing \(y\) by \(uqy\), we obtain the proportional
functional-difference equation

\begin{equation}\label{eqn_PFD}
xF_\alpha(x,qy)-xF_\alpha(x,q^2y)
+
q^{\alpha+1}yF_\alpha(x,uy)
-
qyF_\alpha(x,uqy)
=
0.
\end{equation}
Together with the operator realization of Section~4, this shows that the same family is both generated by a deformed \(q\)-translation operator and characterized by proportional functional relations coupling the scales \(y\), \(qy\), \(uy\), and \(uqy\). We record three elementary consequences illustrating the polynomial, negative-index, and specialized deformation regimes of the Eq.(\ref{eqn_PFD}).
\begin{example}
For \(\alpha=1\), one has
\[
F_1(x,y)=\rr_1(x,y;u\,|\,q)=x+y.
\]
The equation
\[
xF_1(x,qy)-xF_1(x,q^2y)+q^2yF_1(x,uy)-qyF_1(x,uqy)=0
\]
becomes
\[
x(x+qy)-x(x+q^2y)+q^2y(x+uy)-qy(x+uqy)=0,
\]
which is identically satisfied.
\end{example}

\begin{example}
For \(\alpha=-1\), the negative-index representation gives
\[
F_{-1}(x,y)
=
\rr_{-1}(x,y;u\,|\,q)
=
\Theta\left(-\frac{y}{qx},\frac{u}{q}\right).
\]
Hence the pantograph-type equation becomes
\[
x\Theta\left(-\frac{y}{x},\frac{u}{q}\right)
-
x\Theta\left(-\frac{qy}{x},\frac{u}{q}\right)
+
y\Theta\left(-\frac{uy}{qx},\frac{u}{q}\right)
-
qy\Theta\left(-\frac{uy}{x},\frac{u}{q}\right)
=0.
\]
Thus the same functional equation governing the deformed homogeneous
functions yields a nontrivial identity for Ramanujan partial theta
functions.
\end{example}

\begin{example}
If \(u=q\), then the equation reduces to
\[
xF_\alpha(x,qy)-xF_\alpha(x,q^2y)
+
q^{\alpha+1}yF_\alpha(x,qy)
-
qyF_\alpha(x,q^2y)=0,
\]
or equivalently
\begin{equation}\label{eqn_prod_alpha}
(x+q^{\alpha+1}y)F_\alpha(x,qy)
=
(x+qy)F_\alpha(x,q^2y).
\end{equation}
This specialization gives a first-order proportional functional
relation for \(\rr_\alpha(x,y;q\,|\,q)\).
\end{example}

The occurrence of a partial theta function in the case \(\alpha=-1\) is the first instance of a general phenomenon for all negative integer orders. This is developed in the next section.

\section{Negative indices and Ramanujan partial theta functions}

\subsection{The negative-index sector}

The negative-index sector is the first place where the complex-order
continuation departs substantially from the polynomial theory. While
nonnegative integer orders give finite deformed homogeneous polynomials,
negative integer orders give infinite \(q\)-series. The main result of
this section shows that these infinite series are not arbitrary: they
are governed by finite triangular combinations of Ramanujan partial theta functions.

For \(|Q|<1\), Ramanujan's partial theta function is defined by \[ \Theta(z,Q)=\sum_{k=0}^{\infty}Q^{\binom{k}{2}}z^k. \] In the sequel, we shall also use the degenerate boundary value corresponding to \(Q=1\). In this case, \[ \Theta(z,1)=\sum_{k=0}^{\infty}z^k=\frac{1}{1-z}, \qquad |z|<1. \] Thus, when the specialization \(u=q\) is considered, the partial theta terms \(\Theta(z,u/q)\) reduce to geometric series. This allows the Cauchy specialization to be treated within the same triangular framework, provided the corresponding geometric series converge.

\subsection{Finite theta decomposition}

We now establish the central results of the paper, which provide an explicit connection between $u$‑deformed homogeneous functions of negative index and Ramanujan’s partial theta function.

\begin{theorem}\label{theo_rr_theta}
Let \(n\in\mathbb N_0\) and $x\neq0$. As a formal power series in \(y\), one has
\[
\rr_{\minus n\minus1}(x,y;u\,|\,q)
=
\frac{x^{-n-1}}{(q;q)_n}
\sum_{i=0}^{n}
(-1)^i
\begin{bmatrix}
n\\ i
\end{bmatrix}_q
q^{\binom{i+1}{2}}
\Theta\left(
-\frac{y}{q^{n+1-i}x},
\frac{u}{q}
\right).
\]
If \(|u|<q\), the identity holds analytically in \(y\). In particular, when \(u=q\), the identity holds analytically for 
\[ 
\left|\frac{y}{x}\right|<q^{n+1}, 
\] 
after interpreting \(\Theta(z,1)\) as the geometric series \((1-z)^{-1}\).
\end{theorem}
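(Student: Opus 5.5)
Our plan is to compare coefficients of \(y^k\) on both sides, starting from the negative-order expansion Eq.~(\ref{rr_GF_neg}) with \(\alpha=n+1\):
\[
\rr_{\minus n\minus1}(x,y;u\,|\,q)=x^{-n-1}\sum_{k=0}^{\infty}\qbinom{n+k}{k}_q\left(\frac{u}{q}\right)^{\binom{k}{2}}\left(-\frac{y}{q^{n+1}x}\right)^k .
\]
On the right-hand side of the theorem, the definition of \(\Theta\) gives
\[
\Theta\left(-\frac{y}{q^{n+1-i}x},\frac{u}{q}\right)=\sum_{k=0}^{\infty}\left(\frac{u}{q}\right)^{\binom{k}{2}}\left(-\frac{y}{q^{n+1}x}\right)^k q^{ik}.
\]
The factor \(x^{-n-1}(u/q)^{\binom{k}{2}}(-y/(q^{n+1}x))^k\) is common to both sides. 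So the whole statement reduces, as an identity of formal power series in \(y\), to the finite identity
\[
(q;q)_n\qbinom{n+k}{k}_q=\sum_{i=0}^{n}(-1)^i\qbinom{n}{i}_q q^{\binom{i+1}{2}}q^{ik}\qquad(k\geq0).
\]

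This identity is the key step. We will prove it with Eq.~(\ref{iden3}) applied to \(a=q^{k+1}\):
\[
(q^{k+1};q)_n=\sum_{i=0}^{n}\qbinom{n}{i}_q(-1)^iq^{(k+1)i}q^{\binom{i}{2}}.
\]
Since \(q^{(k+1)i+\binom{i}{2}}=q^{ik}q^{\binom{i+1}{2}}\), the right-hand side is exactly the sum above. On the other side, \((q^{k+1};q)_n=(q;q)_{n+k}/(q;q)_k=(q;q)_n\qbinom{n+k}{k}_q\) by Eq.~(\ref{iden5}). This gives the coefficient identity. The sign and power bookkeeping in matching \(q^{-(n+1-i)k}\) with \(q^{-(n+1)k}q^{ik}\) is the only place needing care, and we do not expect a real obstacle.

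For the analytic statements, take \(|u|<q\). Then \(|u/q|<1\), so each of the finitely many \(\Theta\)-series is entire in \(y\), and \(\rr_{\minus n\minus1}\) is entire by Theorem~\ref{conver}. A finite linear combination of absolutely convergent series may be rearranged termwise, so the formal identity becomes an analytic one.

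When \(u=q\), each \(\Theta(z,1)\) is the geometric series \((1-z)^{-1}\). It converges for \(|y/x|<q^{n+1-i}\), and the most restrictive index is \(i=0\), giving \(|y/x|<q^{n+1}\). This matches the disk in the Corollary to Theorem~\ref{conver} with \(m=n+1\). On that disk all series involved converge absolutely, so the same termwise argument applies, and the right-hand side becomes a finite sum of rational functions \((1+y/(q^{n+1-i}x))^{-1}\).
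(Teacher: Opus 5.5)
Your proof is correct, but it takes a different route from the paper.

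The paper does not compare coefficients. It starts from the base case $\rr_{-1}(x,y;u\,|\,q)=x^{-1}\Theta\left(-\frac{y}{qx},\frac{u}{q}\right)$ and applies the iterated $q$-derivative formula of Proposition~2 with $\alpha=-1$, namely $D_q^n\rr_{-1}=(q;q)_n\qbinom{-1}{n}_q\rr_{-n-1}$ with $\qbinom{-1}{n}_q=(-1)^nq^{-\binom{n+1}{2}}$. It then expands $D_q^n$ by Mansour's identity, Eq.~(\ref{mansour}), as a finite combination of the values $f(xq^{n-i})$. In that argument the shifted theta functions are simply the rescalings $x\mapsto xq^{n-i}$ of the $n=0$ case.

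You instead use Eq.~(\ref{rr_GF_neg}) to reduce everything to the coefficient identity $(q;q)_n\qbinom{n+k}{k}_q=\sum_{i}(-1)^i\qbinom{n}{i}_q q^{\binom{i+1}{2}+ik}$. This is the finite $q$-binomial theorem, Eq.~(\ref{iden3}), at $a=q^{k+1}$, and your exponent bookkeeping is right.

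What your route buys: it is more elementary and self-contained. It avoids term-by-term $q$-differentiation and the iteration step behind Proposition~2, which the paper justifies only briefly. It also treats the analytic claims for $|u|<q$ and for $u=q$ (via the Corollary to Theorem~\ref{conver} with $m=n+1$) explicitly, whereas the paper's proof leaves them implicit.

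What the paper's route buys: a structural explanation of the triangular form, as the finite-difference expression of $D_q^n$ applied to the base partial theta function. The same two ingredients also express $\rr_{\alpha-n}(x,y;u\,|\,q)$ through the rescaled copies $\rr_{\alpha}(xq^{j},y;u\,|\,q)$ for any order $\alpha$ with $\qbinom{\alpha}{n}_q\neq0$, not just $\alpha=-1$.
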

\begin{proof}
For \(n=0\), the formula follows from Eq.(\ref{rr_GF_neg}) with \(\alpha=1\):
\[
\rr_{-1}(x,y;u\,|\,q)
=
x^{-1}
\Theta\left(-\frac{y}{qx},\frac{u}{q}\right).
\]
From Proposition 2 with \(\alpha=-1\), we have
\[
D_q^n \rr_{\minus1}(x,y;u\,|\,q)
=
(q;q)_n
\begin{bmatrix}
-1\\ n
\end{bmatrix}_q
\rr_{\minus n\minus1}(x,y;u\,|\,q).
\]
Since
\[
\begin{bmatrix}
-1\\ n
\end{bmatrix}_q
=
(-1)^n q^{-\binom{n+1}{2}},
\]
we obtain
\[
\rr_{\minus n\minus1}(x,y;u\,|\,q)
=
\frac{(-1)^n q^{\binom{n+1}{2}}}{(q;q)_n}
D_q^n \rr_{\minus1}(x,y;u\,|\,q).
\]
Then
\begin{align*}
    \rr_{\minus n\minus1}(x,y;u\,|\,q)&=\frac{(-1)^{n}q^{\binom{n+1}{2}}}{(q;q)_{n}}D_{q}^n\left\{\frac{1}{x}\Theta\left(-\frac{y}{qx},\frac{u}{q}\right)\right\}
\end{align*}
and by Mansour's identity, Eq.(\ref{mansour})
\begin{align*}
    &\rr_{\minus n\minus1}(x,y;u\,|\,q)\\
    &=\frac{(-1)^{n}q^{\binom{n+1}{2}}}{(q;q)_{n}}(-1)^nx^{-n}q^{-\binom{n}{2}}\sum_{i=0}^{n}(-1)^i\qbinom{n}{i}_{q}\frac{q^{\binom{i}{2}}}{xq^{n-i}}\Theta\left(-\frac{y}{xq^{1+n-i}},\frac{u}{q}\right)\\
    &=\frac{1}{(q;q)_{n}}x^{-n-1}\sum_{i=0}^{n}(-1)^i\qbinom{n}{i}_{q}q^{\binom{i+1}{2}}\Theta\left(-\frac{y}{xq^{1+n-i}},\frac{u}{q}\right).
\end{align*}
\end{proof}

\begin{remark}
This representation shows that the negative-index sector is not merely an infinite \(q\)-series extension of the polynomial theory; it is controlled by a finite system of shifted partial theta functions.
\end{remark}

\subsection{Triangular inversion}

\begin{corollary}[Triangular inversion]
For every \(n\in\mathbb N_0\),
\[
\Theta\left(
-\frac{y}{q^{n+1}x},
\frac{u}{q}
\right)
=
\sum_{k=0}^{n}
q^k
\begin{bmatrix}
n\\ k
\end{bmatrix}_q
(q;q)_{n-k}
x^{n-k+1}
R_{-n+k-1}(x,y;u\,|\,q).
\]
\end{corollary}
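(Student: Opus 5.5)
The plan is to treat Theorem~\ref{theo_rr_theta} as a lower-triangular linear system and invert it explicitly. I will substitute the theorem into the right-hand side of the corollary and show that everything except the top theta term cancels. For brevity, set
\[
T_j=\Theta\left(-\frac{y}{q^{j+1}x},\frac{u}{q}\right),\qquad \rho_m=x^{m+1}\rr_{-m-1}(x,y;u\,|\,q),\qquad j,m\in\mathbb N_0 .
\]
Reindexing the sum in Theorem~\ref{theo_rr_theta} by $j=m-i$ gives
\[
(q;q)_m\,\rho_m=\sum_{j=0}^{m}(-1)^{m-j}\qbinom{m}{j}_q q^{\binom{m-j+1}{2}}T_j .
\]
This holds as a formal power series in $y$, and analytically in the regimes stated there. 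Because the diagonal coefficient is $1$, the system is invertible, and the corollary amounts to the claim that the inverse matrix has entries $q^{n-j}\qbinom{n}{j}_q$.

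In the claimed sum, $x^{n-k+1}R_{-n+k-1}(x,y;u\,|\,q)$ is exactly $\rho_{n-k}$. So the right-hand side of the corollary is $\sum_{k=0}^{n}q^k\qbinom{n}{k}_q(q;q)_{n-k}\rho_{n-k}$. I will insert the displayed relation with $m=n-k$ and swap the two finite sums. The coefficient of $T_j$ is then
\[
\sum_{k=0}^{n-j}q^k\qbinom{n}{k}_q\qbinom{n-k}{j}_q(-1)^{n-k-j}q^{\binom{n-k-j+1}{2}} .
\]
Next I will use the elementary identity $\qbinom{n}{k}_q\qbinom{n-k}{j}_q=\qbinom{n}{j}_q\qbinom{n-j}{k}_q$, which follows by writing both sides as $(q;q)_n/((q;q)_k(q;q)_j(q;q)_{n-j-k})$. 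With this, $\qbinom{n}{j}_q$ factors out of the coefficient.

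Put $m=n-j$ and $l=m-k$. The remaining inner sum becomes
\[
\sum_{l=0}^{m}\qbinom{m}{l}_q(-1)^l q^{m-l+\binom{l+1}{2}}=q^m\sum_{l=0}^{m}\qbinom{m}{l}_q(-1)^l q^{\binom{l}{2}}=q^m(1;q)_m ,
\]
where I use $\binom{l+1}{2}-l=\binom{l}{2}$ and then Eq.~(\ref{iden3}) with $a=1$. Since $(1;q)_m=0$ for $m\ge1$ and $(1;q)_0=1$, the coefficient of $T_j$ vanishes for $j<n$ and equals $1$ for $j=n$. Hence the right-hand side reduces to $T_n=\Theta\left(-\frac{y}{q^{n+1}x},\frac{u}{q}\right)$, which is the claim.

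The main obstacle is purely bookkeeping: keeping the exponents consistent through the two reindexings, in particular matching the power $q^k$ in the corollary with $q^{\binom{i+1}{2}}$ from Theorem~\ref{theo_rr_theta} so that the combined exponent collapses to $m+\binom{l}{2}$. Validity carries over directly. The manipulation involves only finite linear combinations, so the identity holds formally in $y$, and analytically wherever Theorem~\ref{theo_rr_theta} holds (for $|u|<q$, or for $u=q$ with $|y/x|<q^{n+1}$).
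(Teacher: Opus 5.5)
Your proof is correct and follows the paper's route: you recast Theorem~3 as the lower-triangular system $(q;q)_m x^{m+1}\rr_{-m-1}=\sum_{j=0}^{m}(-1)^{m-j}\qbinom{m}{j}_q q^{\binom{m-j+1}{2}}T_j$ and invert it. The paper only appeals to ``the corresponding triangular $q$-binomial inversion,'' whereas you verify that inverse explicitly by showing that the coefficient of $T_j$ collapses to $\qbinom{n}{j}_q q^{n-j}(1;q)_{n-j}$ via Eq.~(\ref{iden3}) with $a=1$.
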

\begin{proof}
Set \[ A_n=(q;q)_n x^{n+1}R_{-n-1}(x,y;u\,|\,q), \qquad T_n=\Theta\left(-\frac{y}{q^{n+1}x},\frac{u}{q}\right). \] Then Theorem~3 gives \[ A_n=\sum_{j=0}^n (-1)^j \begin{bmatrix}n\\j\end{bmatrix}_q q^{\binom{j+1}{2}} T_{n-j}. \] The stated formula follows from the corresponding triangular \(q\)-binomial inversion.
\end{proof}

Consequently, the connection established in Theorem 3 is not merely a representation formula. Together with Corollary 2, it gives a finite triangular equivalence between the negative-index sector of the \(u\)-deformed homogeneous functions and a shifted system of Ramanujan partial theta functions.

\subsection{Examples}

The first cases make the triangular pattern transparent.
\begin{example}
\[
\rr_{\minus2}(x,y;u\,|\,q)=\frac{x^{-2}}{1-q}\left(\Theta\left(-\frac{y}{q^2x},\frac{u}{q}\right)-q\Theta\left(-\frac{y}{qx},\frac{u}{q}\right)\right)
\]

\[
\rr_{\minus3}(x,y;u\,|\,q)=\frac{x^{-3}}{(q;q)_{2}}\bigg(\Theta\left(-\frac{y}{q^3x},\frac{u}{q}\right)
    -(1+q)q\Theta\left(-\frac{y}{q^2x},\frac{u}{q}\right)+q^3\Theta\left(-\frac{y}{qx},\frac{u}{q}\right)\bigg)
\]

\begin{align*}
    \rr_{\minus4}(x,y;u\,|\,q)&=\frac{x^{-4}}{(q;q)_{3}}\bigg(\Theta\left(-\frac{y}{q^4x},\frac{u}{q}\right)-(1+q+q^2)q\Theta\left(-\frac{y}{q^3x},\frac{u}{q}\right)\\
    &\hspace{2cm}+(1+q+q^2)q^3\Theta\left(-\frac{y}{q^2x},\frac{u}{q}\right)-q^6\Theta\left(-\frac{y}{qx},\frac{u}{q}\right)\bigg)
\end{align*}
\end{example}
The inverse formulas begin as follows.
\begin{example}
\[
\Theta\left(-\frac{y}{qx},\frac{u}{q}\right)=x\rr_{-1}(x,y;u\,|\,q)
\]

\[
\Theta\left(-\frac{y}{q^2x},\frac{u}{q}\right)=(1-q)x^2\rr_{-2}(x,y;u\,|\,q)+qx\rr_{-1}(x,y;u\,|\,q).
\]
    
\end{example}

\subsection{Asymptotic limit}

The finite theta decomposition admits a limiting form when the negative index tends to infinity and the second variable is simultaneously scaled by a power of \(q\).
\begin{theorem}
If $0<\vert u\vert<q$, then
\begin{equation}
    \lim_{n\rightarrow\infty}x^{n+1}\rr_{\minus n\minus1}(x,q^ny;u\,|\,q)=\frac{1}{(q;q)_{\infty}}\sum_{i=0}^{\infty}(-1)^i\frac{q^{\binom{i+1}{2}}}{(q;q)_{i}}\Theta\left(-q^{i-1}\frac{y}{x},\frac{u}{q}\right).
\end{equation}
\end{theorem}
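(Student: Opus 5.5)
Substitute $y\mapsto q^n y$ in Theorem~\ref{theo_rr_theta} and multiply by $x^{n+1}$. This gives
\[
x^{n+1}\rr_{\minus n\minus1}(x,q^ny;u\,|\,q)=\frac{1}{(q;q)_n}\sum_{i=0}^{n}(-1)^i\qbinom{n}{i}_q q^{\binom{i+1}{2}}\Theta\left(-q^{i-1}\frac{y}{x},\frac{u}{q}\right),
\]
since $q^n y/(q^{n+1-i}x)=q^{i-1}y/x$. The $i$-th theta term no longer depends on $n$. Because $|u|<q$, the identity holds analytically for every $y$, so we may work with a fixed $y\in\C$.

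Next we let $n\to\infty$ termwise. We have $1/(q;q)_n\to1/(q;q)_\infty$. Also
\[
\qbinom{n}{i}_q=\frac{(q;q)_n}{(q;q)_i(q;q)_{n-i}}\to\frac{1}{(q;q)_i}
\]
for each fixed $i$, since $(q;q)_n/(q;q)_{n-i}=(q^{n-i+1};q)_i\to1$. Termwise, the right-hand side therefore tends to the claimed series.

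The main step is justifying the interchange of limit and sum, which we do by dominated convergence (Tannery's theorem). Since $0<(q;q)_n/(q;q)_{n-i}\le 1$, we get $\qbinom{n}{i}_q\le 1/(q;q)_i\le 1/(q;q)_\infty$. For the theta factor, set $Q=u/q$, so $|Q|<1$, and $z_i=-q^{i-1}y/x$, so $|z_i|\le q^{-1}|y/x|=:M$ for all $i\ge0$. Then
\[
|\Theta(z_i,Q)|\le\sum_k|Q|^{\binom{k}{2}}M^k=:C<\infty
\]
uniformly in $i$. (This is where $u\neq0$ with $|u|<q$ is used; the case $u=0$ would also be fine.) The $i$-th term is then bounded in modulus by $q^{\binom{i+1}{2}}C/(q;q)_\infty$, which is summable in $i$ and independent of $n$. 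Tannery's theorem gives the limit, and the prefactor $1/(q;q)_n$ converges to $1/(q;q)_\infty$. The same bound shows that the limiting series converges absolutely, locally uniformly in $y$, so the limit is an entire function of $y$.
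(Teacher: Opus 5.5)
Your proof is correct and follows the paper's argument. You specialize Theorem~\ref{theo_rr_theta} at $y\mapsto q^ny$ so the theta arguments become the $n$-independent values $-q^{i-1}y/x$, pass to the limit termwise using $\qbinom{n}{i}_q\to 1/(q;q)_i$, and justify the interchange by dominated convergence from a uniform bound on the theta factors and the summable weights $q^{\binom{i+1}{2}}$. Your explicit $n$-uniform bound $\qbinom{n}{i}_q\le 1/(q;q)_i$ supplies the dominating function that the paper's proof leaves implicit.
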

\begin{proof}
As
\begin{equation*}
    \lim_{n\rightarrow\infty}\qbinom{n}{i}_{q}=\frac{1}{(q;q)_{i}},
\end{equation*}
then from Theorem \ref{theo_rr_theta},
\begin{align*}
    &\lim_{n\rightarrow\infty}x^{n+1}\rr_{\minus n\minus1}(x,q^ny;u\,|\,q)\\
    &\hspace{1cm}=\lim_{n\rightarrow\infty}\frac{1}{(q;q)_{n}}\sum_{i=0}^{n}(-1)^i\qbinom{n}{i}_{q}q^{\binom{i+1}{2}}\Theta\left(-q^{i-1}\frac{y}{x},\frac{u}{q}\right)\\
    &\hspace{1cm}=\frac{1}{(q;q)_{\infty}}\sum_{i=0}^{\infty}(-1)^i\frac{q^{\binom{i+1}{2}}}{(q;q)_{i}}\Theta\left(-q^{i-1}\frac{y}{x},\frac{u}{q}\right).
\end{align*}  
Since \(|u/q|<1\), the function \(\Theta(z,u/q)\) is entire in \(z\). On compact subsets of the \(y\)-plane, the arguments \(-q^{i-1}y/x\) remain in a fixed compact set. Hence the theta factors are uniformly bounded. Moreover, \[ \frac{q^{\binom{i+1}{2}}}{(q;q)_i} \] is summable, since \((q;q)_i\ge (q;q)_\infty>0\). Therefore, dominated convergence applies.
\end{proof}

\section{Specializations}

\subsection{Polynomial specializations}

The general family \(\rr_{\alpha}(x,y;u\,|\,q)\) contains, for suitable choices of the deformation parameter \(u\), several classical and deformed polynomial families. When \(\alpha=n\in\mathbb N_0\), the series terminates and these specializations reduce to finite polynomial systems. The purpose of this section is to show that the negative-index theta decompositions obtained in Section 6 also admit corresponding specialized forms.

\subsection{Cauchy specialization}

We begin with the specialization \(u=q\). In this case the deformation
factor becomes \(q^{\binom{k}{2}}\), and we define the Cauchy-type
function by
\[
\P_{\alpha}(x,y\,|\,q)
=
\rr_{\alpha}(x,-y;q\,|\,q).
\]
Thus
\begin{equation}\label{cauchy_alpha}
\P_{\alpha}(x,y\,|\,q)
=
\sum_{k=0}^{\infty}
\begin{bmatrix}
\alpha\\ k
\end{bmatrix}_{q}
(-1)^kq^{\binom{k}{2}}x^{\alpha-k}y^k.
\end{equation}
By Theorem~1, if \(\alpha\notin\mathbb N_0\), this series converges for \[ \left|\frac{y}{x}\right|<|q^{-\alpha}|. \] For \(\alpha\in\mathbb N_0\), it terminates and one obtains
the Cauchy-type polynomials
\[
\P_n(x,y\,|\,q)
=
\sum_{k=0}^{n}
\begin{bmatrix}
n\\ k
\end{bmatrix}_{q}
(-1)^kq^{\binom{k}{2}}x^{n-k}y^k.
\]
By the \(q\)-binomial theorem, these polynomials factor as
\[
\P_n(x,y\,|\,q)
=
x^n\left(\frac{y}{x};q\right)_n
=
\prod_{j=0}^{n-1}(x-q^j y).
\]
This shows that the specialization \(u=q\) recovers the finite
\(q\)-Cauchy product in the polynomial regime. By the \(q\)-binomial theorem, the non-polynomial Cauchy specialization admits the infinite product representation
\begin{equation}\label{eqn_fcauchy}
    \P_{\alpha}(x,y|q)=x^\alpha\frac{(y/x;q)_\infty}{(q^\alpha y/x;q)_\infty}\qquad \left|\frac{y}{x}\right|<|q^{-\alpha}|.
\end{equation}

For non-polynomial orders, the specialization $u=q$ lies on the boundary of the partial-theta parameter appearing in the negative-index theory. Indeed, the finite theta decompositions of Section~6 involve the parameter \(u/q\), and for \(u=q\) one has \(u/q=1\). Consequently,
\[
\Theta(z,1)
=
\sum_{k=0}^{\infty}z^k
=
\frac{1}{1-z},
\qquad |z|<1.
\]
Thus the Cauchy specialization transforms the partial-theta
decompositions into finite rational decompositions.

\begin{corollary}[Negative-index Cauchy decomposition] 
Let \(n\in\mathbb N_0\) and \(x\neq 0\). Then 
\[ 
\P_{\minus n\minus1}(x,y\,|\,q) = \frac{x^{-n-1}}{(q;q)_n} \sum_{i=0}^{n} (-1)^i \qbinom{n}{i}_{q} q^{\binom{i+1}{2}} \Theta\left( \frac{y}{q^{n+1-i}x},1 \right). 
\] 
Equivalently, 
\[ 
\P_{\minus n\minus1}(x,y\,|\,q) = \frac{x^{-n-1}}{(q;q)_n} \sum_{i=0}^{n} (-1)^i \qbinom{n}{i}_{q} q^{\binom{i+1}{2}} \frac{q^{n+1-i}x}{q^{n+1-i}x-y}, 
\] 
whenever 
\[ 
\left|\frac{y}{x}\right|<q^{n+1}. 
\]
\end{corollary}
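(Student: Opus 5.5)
The plan is to obtain the corollary as the specialization \(u=q\), \(y\mapsto -y\) of Theorem~\ref{theo_rr_theta}, and then to justify it analytically on the stated disk. By definition \(\P_{\minus n\minus1}(x,y\,|\,q)=\rr_{\minus n\minus1}(x,-y;q\,|\,q)\). Substituting \(u=q\) and \(-y\) for \(y\) in the finite theta decomposition gives, first as an identity of formal power series in \(y\),
\[
\P_{\minus n\minus1}(x,y\,|\,q)=\frac{x^{-n-1}}{(q;q)_n}\sum_{i=0}^{n}(-1)^i\qbinom{n}{i}_{q}q^{\binom{i+1}{2}}\Theta\left(\frac{y}{q^{n+1-i}x},1\right).
\]
This substitution is legitimate coefficientwise, because Theorem~\ref{theo_rr_theta} is an identity of formal power series valid for all \(u\), and \(\Theta(z,1)=\sum_k z^k\) is the formal series obtained by setting \(Q=1\).

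For the analytic statement, I would use the corollary following Theorem~\ref{conver}. With \(|u|=q\) and \(m=n+1\), it shows that \(\P_{\minus n\minus1}\) converges for \(|y/x|<q^{n+1}\). On the right-hand side, the arguments satisfy \(|y/(q^{n+1-i}x)|\le |y|/(q^{n+1}|x|)<1\) for every \(0\le i\le n\), since \(q^{n+1-i}\ge q^{n+1}\). Hence each geometric series converges to \((1-z)^{-1}\).

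Two power series in \(y\) that agree coefficientwise and both converge on a disk define the same analytic function there, so the identity holds on \(|y/x|<q^{n+1}\). This is exactly the case \(u=q\) already noted in Theorem~\ref{theo_rr_theta}. Finally, the rewriting
\[
\left(1-\frac{y}{q^{n+1-i}x}\right)^{-1}=\frac{q^{n+1-i}x}{q^{n+1-i}x-y}
\]
gives the second form.

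The only delicate point is the convergence bookkeeping: one has to check that the worst shift, at \(i=0\), still has argument of modulus less than \(1\), so that the degenerate boundary value \(\Theta(z,1)\) is a convergent geometric series. Everything else is a direct substitution into Theorem~\ref{theo_rr_theta}.
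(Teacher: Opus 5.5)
Your proposal is correct and matches the paper's proof: substitute \(u=q\) and \(y\mapsto -y\) into Theorem~\ref{theo_rr_theta}, reduce \(\Theta(z,1)\) to \((1-z)^{-1}\), and note that the \(i=0\) shift gives the binding condition \(|y/x|<q^{n+1}\). Your check that the left-hand side also converges on that disk (via the corollary following Theorem~\ref{conver}), and your passage from the coefficientwise identity to the analytic one, make explicit what the paper leaves implicit.
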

\begin{proof} 
This follows from Theorem~3 by replacing \(y\) with \(-y\) and setting \(u=q\). Since \(u/q=1\), the partial theta functions reduce to geometric series: \[ \Theta(z,1)=\frac{1}{1-z}. \] The common convergence condition is obtained from \[ \left| \frac{y}{q^{n+1-i}x} \right|<1, \qquad i=0,1,\ldots,n. \] The most restrictive condition occurs when \(i=0\), giving \[ \left|\frac{y}{x}\right|<q^{n+1}. \] 
\end{proof}

\begin{corollary}[Inverse Cauchy transform] 
For every \(n\in\mathbb N_0\), 
\[ 
\frac{q^{n+1}x}{q^{n+1}x-y} = \sum_{k=0}^{n} q^k\qbinom{n}{k}_q(q;q)_{n-k} x^{n-k+1}\P_{\minus n+k\minus1}(x,y\,|\,q), 
\] 
for 
\[ 
\left|\frac{y}{x}\right|<q^{n+1}. 
\] 
\end{corollary}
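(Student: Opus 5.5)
The plan is to obtain the statement as the Cauchy specialization of the triangular inversion in Corollary~2, in the same way that Corollary~3 was obtained from Theorem~\ref{theo_rr_theta}. I will set \(u=q\) and replace \(y\) by \(-y\) in Corollary~2. By definition \(\rr_{-m}(x,-y;q\,|\,q)=\P_{-m}(x,y\,|\,q)\), so each term \(x^{n-k+1}R_{-n+k-1}(x,y;u\,|\,q)\) on the right becomes \(x^{n-k+1}\P_{-n+k-1}(x,y\,|\,q)\). The coefficients \(q^k\qbinom{n}{k}_q(q;q)_{n-k}\) do not depend on \(u\) or \(y\), so they are unchanged. On the left, the theta factor becomes \(\Theta\left(\frac{y}{q^{n+1}x},1\right)\), and by the boundary convention of Section~6 this equals
\[
\frac{1}{1-\frac{y}{q^{n+1}x}}=\frac{q^{n+1}x}{q^{n+1}x-y}.
\]
This gives the displayed identity as an identity of formal power series in \(y\).

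Next I will upgrade the formal identity to an analytic one on the disk \(|y/x|<q^{n+1}\). The left side is the geometric series, which converges exactly there. For the right side, with \(m=n-k+1\in\{1,\dots,n+1\}\), Corollary~1 (or Theorem~\ref{conver} with \(|u|=q\)) says that \(\P_{-m}(x,y\,|\,q)\) converges for \(|y/x|<q^{m}\). Since \(q^{n+1}\le q^{m}\), all \(n+1\) series converge on the common disk. Two power series in \(y\) that agree coefficientwise and both converge on a disk define the same analytic function there, which yields the analytic statement.

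The main obstacle is that Corollary~2 was justified only by citing ``the corresponding triangular \(q\)-binomial inversion''. I would therefore check that inversion directly, since the specialization inherits its correctness. Write \(A_n=\sum_j(-1)^j\qbinom{n}{j}_q q^{\binom{j+1}{2}}T_{n-j}\) and substitute into \(\sum_k q^k\qbinom{n}{k}_q A_{n-k}\). Collecting the coefficient of \(T_{n-s}\) and using \(\qbinom{n}{k}_q\qbinom{n-k}{j}_q=\qbinom{n}{s}_q\qbinom{s}{k}_q\) with \(s=j+k\), the inversion reduces to
\[
\sum_{k=0}^{s}q^k\qbinom{s}{k}_q(-1)^{s-k}q^{\binom{s-k+1}{2}}=\delta_{s,0}.
\]
Reversing the order of summation and applying Eq.~(\ref{iden3}) shows that this is, up to a power of \(q\), \((a;q)_s\) evaluated at \(a=1\), which vanishes for \(s\ge1\). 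The key point is tracking the exponent of \(q\) via Eq.~(\ref{iden7}) and Eq.~(\ref{iden8}) to confirm that the argument is indeed \(a=1\). If that bookkeeping fails, I would instead verify the identity directly from Corollary~3's rational form by the same substitution.
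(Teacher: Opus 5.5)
Your proposal is correct and follows the paper's own proof: you specialize Corollary~2 at $u=q$ with $y$ replaced by $-y$, so that $\Theta\bigl(y/(q^{n+1}x),1\bigr)$ becomes the geometric series $q^{n+1}x/(q^{n+1}x-y)$. Your direct check of the triangular inversion (which reduces to $q^s(1;q)_s=\delta_{s,0}$) and your convergence argument on $|y/x|<q^{n+1}$ via Corollary~1 supply details that the paper leaves implicit.
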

\begin{proof} 
This is the specialization of Corollary~2 obtained by replacing \(y\) with \(-y\) and setting \(u=q\). Since \(u/q=1\), the shifted partial theta functions become geometric series. 
\end{proof}

The first inverse identities illustrate how simple rational functions are recovered from negative-index Cauchy functions
\[ 
\frac{qx}{qx-y} = x\P_{-1}(x,y\,|\,q)
\] 
and 
\[ 
\frac{q^2x}{q^2x-y} = (1-q)x^2\P_{-2}(x,y\,|\,q) + qx\P_{-1}(x,y\,|\,q). 
\]
\begin{corollary}[Cauchy asymptotic limit] 
Let \(x\neq0\). Then 
\[ 
\lim_{n\to\infty} x^{n+1}\P_{\minus n\minus1}(x,q^ny\,|\,q) = \frac{1}{(q;q)_\infty} \sum_{i=0}^{\infty} \frac{(-1)^i q^{\binom{i+1}{2}}}{(q;q)_i} \frac{x}{x-q^{i-1}y}. 
\] 
The convergence is locally uniform for \(\left|y/x\right|<q\). 
\end{corollary}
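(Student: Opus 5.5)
Specialize the negative-index Cauchy decomposition to argument \(q^ny\) and pass to the limit term by term, mirroring the proof of the asymptotic limit theorem in Section~6. Replacing \(y\) by \(q^ny\) in that decomposition gives
\[
x^{n+1}\P_{\minus n\minus1}(x,q^ny\,|\,q)=\frac{1}{(q;q)_n}\sum_{i=0}^{n}(-1)^i\qbinom{n}{i}_q q^{\binom{i+1}{2}}\frac{q^{n+1-i}x}{q^{n+1-i}x-q^ny}
=\frac{1}{(q;q)_n}\sum_{i=0}^{n}(-1)^i\qbinom{n}{i}_q q^{\binom{i+1}{2}}\frac{x}{x-q^{i-1}y}.
\]
The validity condition \(|q^ny/x|<q^{n+1}\) becomes \(|y/x|<q\), independent of \(n\). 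This is why the hypothesis \(|y/x|<q\) appears, and it is exactly the condition making every factor \(x/(x-q^{i-1}y)\), \(i\ge0\), a convergent geometric series value. Note that the \(n\)-dependence has disappeared from the rational factors; it survives only in \(\qbinom{n}{i}_q\) and \(1/(q;q)_n\).

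Next, pass to the limit. As \(n\to\infty\) we have \(1/(q;q)_n\to1/(q;q)_\infty\) and \(\qbinom{n}{i}_q\to 1/(q;q)_i\) for each fixed \(i\). Extend the sum to \(i\ge0\) with the convention \(\qbinom{n}{i}_q=0\) for \(i>n\), and apply dominated convergence for series (Tannery's theorem).

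For the domination, use \(0\le\qbinom{n}{i}_q\le 1/(q;q)_i\le 1/(q;q)_\infty\), which follows from \((q;q)_n\le(q;q)_{n-i}\). For the rational factor, take a compact set \(K\) in \(\{|y/x|<q\}\) and set \(r=\sup_K|y/x|<q\). Then \(|q^{i-1}y/x|\le r/q<1\), so
\[
\left|\frac{x}{x-q^{i-1}y}\right|\le \frac{1}{1-r/q}
\]
uniformly in \(i\ge0\). The majorant is therefore \(C\,q^{\binom{i+1}{2}}\), which is summable. This gives both pointwise convergence and locally uniform convergence on \(|y/x|<q\), since the tail bound \(\sum_{i>N}Cq^{\binom{i+1}{2}}\) is uniform on \(K\) and the finitely many remaining terms converge uniformly.

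The main obstacle is the \(i=0\) term. There the factor is \(x/(x-q^{-1}y)\), whose pole sits at \(y=qx\), on the boundary of the region. This is precisely why uniformity is claimed only locally, on compact subsets of \(|y/x|<q\), and why the Section~6 theorem (which required \(|u|<q\) for entire theta factors) cannot be quoted directly and must be redone with the rational bound above.
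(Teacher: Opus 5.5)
Your proposal is correct and follows essentially the same route as the paper. You substitute \(q^ny\) into the negative-index Cauchy decomposition, bound the geometric factors \(x/(x-q^{i-1}y)\) uniformly by \(1/(1-\rho/q)\) on compact subsets of \(|y/x|<q\), and apply dominated convergence against the summable weights \(q^{\binom{i+1}{2}}\). Your explicit simplification \(q^{n+1-i}x/(q^{n+1-i}x-q^ny)=x/(x-q^{i-1}y)\), the domination \(\qbinom{n}{i}_q\le 1/(q;q)_i\) of the \(n\)-dependent coefficients, and the uniform tail estimate for local uniformity fill in details that the paper's proof leaves implicit.
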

\begin{proof} 
The proof follows the same limiting argument as Theorem~4, after replacing \(y\) with \(-y\) and setting \(u=q\). Let \(K\) be a compact subset of 
\[ 
\left\{ y:\left|\frac{y}{x}\right|<q \right\}. 
\] 
Then there exists \(\rho<q\) such that 
\[ 
\left|\frac{y}{x}\right|\leq \rho 
\] 
for all \(y\in K\). Hence, for every \(i\geq0\), 
\[ 
\left|q^{i-1}\frac{y}{x}\right| \leq q^{i-1}\rho \leq \frac{\rho}{q}<1. 
\] 
Therefore the geometric factors 
\[ 
\Theta\left(q^{i-1}\frac{y}{x},1\right) = \frac{1}{1-q^{i-1}y/x} 
\] 
are uniformly bounded on \(K\). Since 
\[ 
\sum_{i=0}^{\infty} \frac{q^{\binom{i+1}{2}}}{(q;q)_i} 
\] 
converges, dominated convergence applies. This gives the stated limit. 
\end{proof}

Thus the Cauchy specialization represents the boundary case of the partial-theta framework. In this case the theta functions degenerate into geometric series, and the finite triangular theta correspondence becomes a finite triangular rational correspondence.

\subsection{Stieltjes--Wigert-type specializations}

We now consider the specialization \[ u=q^b,\qquad b\geq 2. \] The restriction \(b\geq2\) is imposed in order to exclude the boundary case \(b=1\), which corresponds to \(u=q\) and was treated separately in the Cauchy specialization. Indeed, the partial-theta parameter appearing in the negative-index decompositions is \(u/q\). Hence, for \(u=q^b\), this parameter becomes \[ \frac{u}{q}=q^{b-1}. \] When \(b=1\), one obtains \(q^{b-1}=1\), so that the partial theta functions degenerate into geometric series, precisely as in the Cauchy case. For \(b\geq2\), however, one has \(0<q^{b-1}<1\), and the specialized decompositions involve genuine Ramanujan partial theta functions.

In order to recover the usual Stieltjes--Wigert-type sign pattern in the polynomial case, we introduce 
\[ 
\s_{\alpha,b}(x,y;q) = \rr_{\alpha}(x,-qy;q^b\,|\,q). 
\] 
Thus 
\[ 
\s_{\alpha,b}(x,y;q) = \sum_{k=0}^{\infty} \qbinom{\alpha}{k}_{q} (-1)^k q^{b\binom{k}{2}+k} x^{\alpha-k}y^k . 
\] 
In particular, when \(b=2\), we write 
\[ 
\s_{\alpha}(x,y;q):=\s_{\alpha,2}(x,y;q), 
\] 
so that 
\[ 
\s_{\alpha}(x,y;q) = \sum_{k=0}^{\infty}\qbinom{\alpha}{k}_{q} (-1)^k q^{k^2}x^{\alpha-k}y^k . 
\] 
For \(\alpha=n\in\mathbb N_0\), the series terminates and gives the Stieltjes--Wigert-type polynomials 
\[ 
\s_{n,b}(x,y;q) = \sum_{k=0}^{n} \qbinom{n}{k}_{q} (-1)^k q^{b\binom{k}{2}+k} x^{n-k}y^k . 
\] 
The case \(b=2\) corresponds to the standard \(q^{k^2}\)-weight.
From the negative-order expansion of \(\rr_{\alpha}\), one obtains 
\[ 
\s_{-\alpha,b}(x,y;q) = \frac{1}{x^\alpha} \sum_{k=0}^{\infty} \begin{bmatrix} \alpha+k-1\\ k \end{bmatrix}_{q} q^{(b-1)\binom{k}{2}} \left( \frac{y}{q^{\alpha-1}x} \right)^k . 
\] 
In particular, 
\[ 
\s_{-n,b}(x,y;q) = x^{-n} \sum_{k=0}^{\infty} \begin{bmatrix} n+k-1\\ k \end{bmatrix}_{q} q^{(b-1)\binom{k}{2}} \left( \frac{y}{q^{n-1}x} \right)^k . 
\]

\begin{corollary}[Negative-index Stieltjes--Wigert-type decomposition]
Let \(b\geq2\), \(n\in\mathbb N_0\), and \(x\neq0\). Then
\[
\s_{-n-1,b}(x,y;q)
=
\frac{x^{-n-1}}{(q;q)_n}
\sum_{i=0}^{n}
(-1)^i
\begin{bmatrix}
n\\ i
\end{bmatrix}_{q}
q^{\binom{i+1}{2}}
\Theta\left(
\frac{y}{q^{\,n-i}x},
q^{b-1}
\right).
\]
The identity holds analytically in \(y\).
\end{corollary}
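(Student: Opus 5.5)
The identity is the specialization $u=q^b$, $y\mapsto -qy$ of Theorem~\ref{theo_rr_theta}. By definition, $\s_{-n-1,b}(x,y;q)=\rr_{-n-1}(x,-qy;q^b\,|\,q)$. We substitute $u=q^b$ and replace $y$ by $-qy$ in the finite theta decomposition of Theorem~\ref{theo_rr_theta}.

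The partial theta parameter becomes $u/q=q^{b-1}$. The argument of the $i$-th theta function transforms as
\[
-\frac{-qy}{q^{n+1-i}x}=\frac{y}{q^{\,n-i}x},
\]
which is exactly the argument in the statement. The prefactor $x^{-n-1}/(q;q)_n$ and the coefficients $(-1)^i\qbinom{n}{i}_q q^{\binom{i+1}{2}}$ do not involve $u$ or $y$, so they are unchanged. This gives the formula as an identity of formal power series in $y$.

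For the analytic statement, note that $b\geq2$ gives $|u|=q^b<q$. Theorem~\ref{theo_rr_theta} therefore holds analytically, since its left side is entire in $y$ by Theorem~\ref{conver}. On the right, each $\Theta(z,q^{b-1})$ with $0<q^{b-1}<1$ is entire in $z$, because its coefficients decay like $q^{(b-1)\binom{k}{2}}$. The right side is a finite sum of entire functions, and two entire functions with equal Taylor coefficients coincide, so the identity holds for all $y$.

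As a consistency check, expanding the $k$-th coefficient of the right side by the $q$-binomial theorem should reproduce $\qbinom{n+k}{k}_q q^{(b-1)\binom{k}{2}}q^{-nk}x^{-n-1}$, matching the displayed expansion of $\s_{-n,b}$. The only place needing care is the sign and power bookkeeping in $-(-qy)/(q^{n+1-i}x)$; nothing else is expected to be an obstacle.
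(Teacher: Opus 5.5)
Your proposal is correct and is the paper's own argument: specialize Theorem~\ref{theo_rr_theta} at $u=q^b$ with $y\mapsto -qy$, so that the theta parameter becomes $q^{b-1}$ and the $i$-th argument becomes $y/(q^{n-i}x)$. Your analytic justification ($|u|=q^b<q$, together with $\Theta(z,q^{b-1})$ being entire) is slightly more explicit than the paper's; the only nit is that your consistency check concerns the expansion of $\s_{-n-1,b}$, i.e.\ the displayed formula for $\s_{-n,b}$ with $n$ replaced by $n+1$.
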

\begin{proof}
This follows from Theorem~3 by replacing \(y\) with \(-qy\) and setting
\(u=q^b\). Then the partial-theta parameter becomes
\[
\frac{u}{q}=q^{b-1}.
\]
For \(b\geq2\), one has \(|q^{b-1}|<1\), so the theta functions are genuine partial theta functions. 
\end{proof}

In the classical Stieltjes--Wigert-type case \(b=2\), this gives 
\[ 
\s_{\minus n\minus1}(x,y;q) = \frac{x^{-n-1}}{(q;q)_n} \sum_{i=0}^{n} (-1)^i \begin{bmatrix} n\\ i \end{bmatrix}_{q} q^{\binom{i+1}{2}} \Theta\left( \frac{y}{q^{\,n-i}x}, q \right). 
\]

\begin{corollary}[Inverse Stieltjes--Wigert-type transform]
For every \(b\geq2\) and \(n\in\mathbb N_0\),
\[
\Theta\left(
\frac{y}{q^n x},
q^{b-1}
\right)
=
\sum_{k=0}^{n}
q^k
\begin{bmatrix}
n\\ k
\end{bmatrix}_{q}
(q;q)_{n-k}
x^{n-k+1}
\s_{-n+k-1,b}(x,y;q).
\]
\end{corollary}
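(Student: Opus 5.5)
The plan is to obtain the statement as a direct specialization of Corollary~2 (triangular inversion), in the same way that the Inverse Cauchy transform was obtained from it. I substitute \(y\mapsto -qy\) and set \(u=q^b\) with \(b\geq2\) in Corollary~2.

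\emph{Left-hand side.} It becomes
\[
\Theta\left(-\frac{-qy}{q^{n+1}x},\frac{q^b}{q}\right)=\Theta\left(\frac{y}{q^{n}x},q^{b-1}\right).
\]

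\emph{Right-hand side.} By the definition \(\s_{\alpha,b}(x,y;q)=\rr_{\alpha}(x,-qy;q^b\,|\,q)\), each term \(\rr_{-n+k-1}(x,-qy;q^b\,|\,q)\) is exactly \(\s_{-n+k-1,b}(x,y;q)\). The coefficients \(q^k\qbinom{n}{k}_q(q;q)_{n-k}x^{n-k+1}\) do not involve \(y\) or \(u\), so they are unchanged. Together these give the identity as stated.

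Corollary~2 was justified only by appeal to ``the corresponding triangular \(q\)-binomial inversion'', so I would make that step explicit. Put \(A_m=(q;q)_m x^{m+1}\rr_{-m-1}\) and \(T_m=\Theta(-y/(q^{m+1}x),u/q)\). Theorem~3 says
\[
A_n=\sum_j(-1)^j\qbinom{n}{j}_q q^{\binom{j+1}{2}}T_{n-j}.
\]
Multiply by \(t^n/(q;q)_n\) and sum over \(n\). Euler's identity \(\sum_j(-1)^jq^{\binom{j}{2}}z^j/(q;q)_j=(z;q)_\infty\), taken at \(z=qt\), turns this into
\[
\sum_n A_n\frac{t^n}{(q;q)_n}=(qt;q)_\infty\sum_n T_n\frac{t^n}{(q;q)_n}.
\]
Dividing by \((qt;q)_\infty\) and using \(1/(qt;q)_\infty=\sum_k (qt)^k/(q;q)_k\) (the \(q\)-binomial theorem with \(a=0\)) gives
\[
T_n=\sum_k q^k\qbinom{n}{k}_q A_{n-k}.
\]
Since \(A_{n-k}=(q;q)_{n-k}x^{n-k+1}\rr_{-n+k-1}\), this is Corollary~2. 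All of this is an identity of formal power series in \(t\), so only finitely many terms enter each coefficient. As a sanity check, \(n=1\) gives \(T_1=A_1+qA_0\), which matches the second inverse example.

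\emph{Analytic validity.} For \(b\geq2\) we have \(|u|=q^b<q\). By Theorem~1, every \(\s_{-m,b}\) is then entire in \(y\), and \(\Theta(\cdot,q^{b-1})\) is entire because \(q^{b-1}<1\). So the formal identity holds for all \(y\in\mathbb C\), with no disk restriction, unlike the Cauchy case.

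The only real obstacle is the bookkeeping in the substitution \(y\mapsto -qy\): the extra factor \(q\) has to cancel against one power of \(q^{n+1}\) in the theta argument, and the sign has to flip correctly. This is precisely what shifts the theta argument from \(q^{n+1}\) in Corollary~2 to \(q^{n}\) here. I will check it term by term, including the \(k=n\) term, which involves \(\s_{-1,b}\).
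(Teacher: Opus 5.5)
Your proposal is correct and follows the paper's own proof: substitute \(y\mapsto -qy\) and \(u=q^b\) in Corollary~2, which turns the theta argument into \(y/(q^nx)\) and the parameter into \(q^{b-1}\). Your generating-function derivation of the triangular inversion (Euler's identity at \(z=qt\), then inverting with \(1/(qt;q)_\infty=\sum_k (qt)^k/(q;q)_k\)) is correct, and so is your remark that both sides are entire for \(b\geq2\); both supply details the paper leaves implicit.
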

\begin{proof}
This follows from Corollary~2 by replacing \(y\) with \(-qy\) and
setting \(u=q^b\). Under this specialization,
\[
-\frac{-qy}{q^{n+1}x}
=
\frac{y}{q^n x},
\qquad
\frac{u}{q}=q^{b-1}.
\]
\end{proof}

The first inverse identities are 
\[ 
\Theta\left(\frac{y}{x},q^{b-1}\right) = x\s_{-1,b}(x,y;q), 
\] 
and 
\[ 
\Theta\left(\frac{y}{qx},q^{b-1}\right) = (1-q)x^2\s_{-2,b}(x,y;q) + qx\s_{-1,b}(x,y;q). 
\] 
For \(b=2\), these become \[ \Theta\left(\frac{y}{x},q\right) = x\s_{-1}(x,y;q), 
\] 
and 
\[ 
\Theta\left(\frac{y}{qx},q\right) = (1-q)x^2\s_{-2}(x,y;q) + qx\s_{-1}(x,y;q). 
\]

\begin{corollary}[Stieltjes--Wigert-type asymptotic limit]
Let \(b\geq2\) and \(x\neq0\). Then
\[
\lim_{n\to\infty}
x^{n+1}\s_{-n-1,b}(x,q^ny;q)
=
\frac{1}{(q;q)_\infty}
\sum_{i=0}^{\infty}
\frac{(-1)^iq^{\binom{i+1}{2}}}{(q;q)_i}
\Theta\left(
q^i\frac{y}{x},
q^{b-1}
\right).
\]
The convergence is locally uniform in \(y/x\).
\end{corollary}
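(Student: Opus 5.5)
The plan is to derive this limit from the negative-index Stieltjes--Wigert-type decomposition (Corollary~6), in parallel with the proof of Theorem~4. The Cauchy asymptotic limit is not the right model here, since $b\geq2$ gives a genuine partial theta parameter.

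\textbf{Step 1 (the finite-$n$ formula).} Replace $y$ by $q^ny$ in Corollary~6. Since
\[
\frac{q^ny}{q^{n-i}x}=q^i\frac{y}{x},
\]
multiplying by $x^{n+1}$ gives
\[
x^{n+1}\s_{-n-1,b}(x,q^ny;q)=\frac{1}{(q;q)_n}\sum_{i=0}^{n}(-1)^i\qbinom{n}{i}_q q^{\binom{i+1}{2}}\Theta\left(q^i\frac{y}{x},q^{b-1}\right).
\]
Equivalently, this follows from Theorem~4 with $u=q^b$ and $y$ replaced by $-qy$: the argument $-q^{i-1}(-qy)/x$ becomes $q^iy/x$. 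Because $0<q^{b-1}<1$, each $\Theta(\cdot,q^{b-1})$ is entire, so the identity is analytic in $y$ and no domain restriction arises.

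\textbf{Step 2 (termwise limits).} As $n\to\infty$ we have $1/(q;q)_n\to1/(q;q)_\infty$ and $\qbinom{n}{i}_q\to1/(q;q)_i$ for each fixed $i$. Extend the sum to $i\geq0$ by setting the summand to zero for $i>n$.

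\textbf{Step 3 (domination; the main point to check).} Fix a compact set $K$ of values of $w=y/x$, with $|w|\leq M$ on $K$. Then $|q^iw|\leq M$ for all $i\geq0$, so
\[
\left|\Theta(q^iw,q^{b-1})\right|\leq\sum_{k\ge0}q^{(b-1)\binom{k}{2}}M^k=:C_M<\infty .
\]
For the binomial factor I would use the uniform bound
\[
\qbinom{n}{i}_q=\frac{(q;q)_n}{(q;q)_i(q;q)_{n-i}}\leq\frac{1}{(q;q)_i(q;q)_\infty}\leq\frac{1}{(q;q)_\infty^2}.
\]
Hence the $i$-th summand is bounded by $C_M\,q^{\binom{i+1}{2}}/(q;q)_\infty^2$, independently of $n$ and of $w\in K$, and this bound is summable in $i$.

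\textbf{Step 4 (conclusion).} Apply dominated convergence (Tannery's theorem) to the sum over $i$, and combine it with the convergence of $1/(q;q)_n$. This yields the stated limit, uniformly on $K$, which is the claimed locally uniform convergence in $y/x$.

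I expect Step~3 to be the only nonroutine point. Theorem~4's proof asserts domination without bounding $\qbinom{n}{i}_q$ uniformly in $n$, so I would state that bound explicitly here.
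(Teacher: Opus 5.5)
Your proof is correct and is essentially the paper's argument. The paper substitutes $y\mapsto -qy$, $u=q^b$ directly into Theorem~4, whereas you substitute $y\mapsto q^ny$ into Corollary~6 and rerun Theorem~4's termwise-limit and dominated-convergence argument in this special case; your explicit bound $\qbinom{n}{i}_q\le 1/\bigl((q;q)_i(q;q)_\infty\bigr)$, uniform in $n$ and combined with the bound $C_M$ on the theta factors, supplies the $n$-independent dominating sequence and the local uniformity in $y/x$ that the paper's proof of Theorem~4 leaves implicit.
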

\begin{proof}
This follows from Theorem~4 by replacing \(y\) with \(-qy\) and setting
\(u=q^b\). Since \(b\geq2\), one has \(0<q^b<q\), so the hypothesis of
Theorem~4 is satisfied. Moreover,
\[
-\frac{q^{i-1}(-qy)}{x}
=
q^i\frac{y}{x},
\qquad
\frac{u}{q}=q^{b-1}.
\]
Therefore the stated limit follows directly.
\end{proof}

Thus the Stieltjes--Wigert-type specialization lies inside the analytic partial-theta regime whenever \(b\geq2\). In contrast with the Cauchy specialization, where the theta parameter reaches the boundary value \(1\), the Stieltjes--Wigert-type family produces genuine partial theta functions with parameter \(q^{b-1}\).

\subsection{Further half-integer specializations}

We briefly mention that the same framework also covers half-integer deformation parameters. More precisely, for 
\[ 
u=q^{b+\frac12},\qquad b\geq1, 
\] 
one obtains the Exton-type specialization 
\[ 
\E_{\alpha,b}(x,y;q) = \rr_{\alpha}\left(x,y;q^{b+\frac12}\,|\,q\right). 
\] 
Thus 
\[ 
\E_{\alpha,b}(x,y;q) = \sum_{k=0}^{\infty} \begin{bmatrix} \alpha\\ k \end{bmatrix}_{q} q^{\left(b+\frac12\right)\binom{k}{2}} x^{\alpha-k}y^k. 
\] 
For \(\alpha=n\in\mathbb N_0\), this reduces to a finite Exton-type polynomial family \cite{orozco}. From the point of view of the negative-index theory, this specialization belongs entirely to the analytic partial-theta regime. Indeed, the partial-theta parameter appearing in Theorem~3 and Corollary~2 is \(u/q\), and in the present case 
\[ 
\frac{u}{q}=q^{b-\frac12}. 
\] 
Since \(b\geq1\) and \(0<q<1\), one has 
\[ 
0<q^{b-\frac12}<1. 
\] 
Consequently, Theorem~3, Corollary~2, and Theorem~4 specialize directly to the Exton-type family \(\E_{\alpha,b}(x,y;q)\), with the partial theta parameter \(q^{b-\frac12}\). We do not write these formulas explicitly, since they are obtained from the preceding results by the immediate substitution \[ 
u=q^{b+\frac12}. 
\] 
Thus the half-integer Exton-type regime provides a natural companion to the integer Stieltjes--Wigert-type regime, replacing the theta parameter \(q^{b-1}\) by the half-integer parameter \(q^{b-\frac12}\).

\subsection{Remarks on the specialized negative-index sector}

The specializations considered above illustrate three different regimes of the negative-index theory, distinguished by the value of the partial-theta parameter \(u/q\). First, the Cauchy specialization corresponds to the boundary case \[ u=q, \qquad \frac{u}{q}=1. \] In this regime the partial theta function degenerates into the geometric series \[ \Theta(z,1)=\frac{1}{1-z}, \] inside its disk of convergence. Consequently, the finite triangular partial-theta decompositions of the negative-index functions become finite triangular rational decompositions. Second, the Stieltjes--Wigert-type specialization corresponds to \[ u=q^b,\qquad b\geq2, \] and hence \[ \frac{u}{q}=q^{b-1}. \] Since \(0<q^{b-1}<1\), this specialization belongs to the genuine partial-theta regime. Thus the negative-index Stieltjes--Wigert-type functions are governed by finite triangular combinations of shifted Ramanujan partial theta functions with theta parameter \(q^{b-1}\). Finally, the half-integer Exton-type specialization \[ u=q^{b+\frac12},\qquad b\geq1, \] gives \[ \frac{u}{q}=q^{b-\frac12}, \] which also lies strictly inside the unit disk. Hence this regime is covered by the same finite triangular mechanism, with theta parameter \(q^{b-\frac12}\). Since the corresponding formulas follow directly from Theorem~3, Corollary~2, and Theorem~4 by the substitution \(u=q^{b+\frac12}\), no separate derivation is needed. Thus the specialized negative-index sector separates naturally into a boundary geometric case and genuine partial-theta cases. The Cauchy specialization produces rational triangular decompositions, whereas the Stieltjes--Wigert and Exton-type specializations remain within the analytic partial-theta framework. This shows that the triangular correspondence established in Section~6 is stable under classical \(q\)-special regimes and is not tied to a single choice of the deformation parameter.

\section{Further directions} 

The results of this paper suggest several directions for further investigation. We list some of them below. 
\begin{enumerate} 
\item \textbf{Analytic dependence on the complex order.} In the present work, we have focused mainly on convergence in the second variable, recurrence relations, \(q\)-differentiation formulas, and the negative-index sector. A natural continuation is to study the dependence of 
\[ 
\rr_{\alpha}(x,y;u\,|\,q) 
\] 
on the complex parameter \(\alpha\). This includes questions about analytic continuation, possible singularities, growth properties, and the behavior of zeros in the non-polynomial regime. 

\item \textbf{A transform theory for the negative-index sector.} Theorem~3 and Corollary~2 show that the negative-index functions and shifted Ramanujan partial theta functions are related by an explicit finite triangular transform. It would be interesting to develop a more systematic transform theory associated with this triangular correspondence. In particular, one may ask whether natural dual systems, moment functionals, or biorthogonality relations can be constructed from this equivalence. 

\item \textbf{Further operator-theoretic developments.} The operator realization in Section~4 shows that \(\rr_{\alpha}(x,y;u\,|\,q)\) arises from the action of a deformed \(q\)-exponential operator on complex powers. A broader operator calculus could be developed by studying the action of this operator on other classes of functions, its interaction with Jackson \(q\)-derivatives, and its possible role in generating additional identities for \(q\)-special functions. 

\item \textbf{Additional specializations.} The specializations discussed in Section~7 show that the negative-index theory is stable under several classical \(q\)-special regimes. The Cauchy specialization corresponds to a boundary case in which partial theta functions degenerate into geometric series, whereas the Stieltjes--Wigert and half-integer Exton-type regimes remain inside the genuine partial-theta domain. Other choices of the deformation parameter \(u\), especially those related to known \(q\)-polynomial families, may lead to further triangular decompositions and limiting formulas. 

\item \textbf{Combinatorial and asymptotic interpretations.} Since Ramanujan partial theta functions arise naturally from the negative orders of the deformed homogeneous calculus, it would be worth investigating whether the triangular decompositions obtained here admit combinatorial interpretations. Possible directions include partition-theoretic generating functions, asymptotic analysis of the negative-index sector, and related questions in the theory of \(q\)-series. 

\item \textbf{Possible modular or mock-theta connections.} Partial theta functions are closely related to several themes in the broader theory of \(q\)-series. Although no modular or mock-modular properties are proved in this paper, the appearance of partial theta functions in the negative-index sector suggests that such connections may deserve further investigation. 
\end{enumerate} 
These directions indicate that the negative-index sector of the \(u\)-deformed homogeneous calculus is not merely a formal extension of the polynomial theory, but a natural source of partial-theta structures and triangular transformations.

\end{document}